\newtheorem{theorem}{Theorem}   
\newtheorem{lemma}{Lemma}
\newtheorem{proposition}{Proposition}
\newtheorem{corollary}{Corollary}
\newtheorem{example}{Example}
\newtheorem{remark}{Remark}
\newtheorem{definition}{Definition}
\newtheorem{assumption}{Assumption}
\DeclareMathOperator{\im}{im}
\DeclareMathOperator{\rank}{rank}
\DeclareMathOperator{\inte}{int}
\DeclareMathOperator{\inertia}{In}
\let\leq\leqslant
\let\geq\geqslant
\newcommand{\bmat}{\begin{matrix}}
\newcommand{\emat}{\end{matrix}}
\newcommand{\bbm}{\begin{bmatrix}}
\newcommand{\ebm}{\end{bmatrix}}
\newcommand{\bbma}{\begin{bmatrix*}[r]}
\newcommand{\ebma}{\end{bmatrix*}}
\newcommand{\bpm}{\begin{pmatrix}}
\newcommand{\epm}{\end{pmatrix}}
\newcommand{\bvm}{\begin{vmatrix}}
\newcommand{\evm}{\end{vmatrix}}
\newcommand{\bse}{\begin{subequations}}
\newcommand{\ese}{\end{subequations}}
\newcommand{\beq}{\begin{equation}}
\newcommand{\eeq}{\end{equation}}
\newcommand{\ben}{\renewcommand{\labelenumi}{\arabic{enumi}.}
\renewcommand{\theenumi}{\arabic{enumi}}\begin{enumerate}}
\newcommand{\een}{\end{enumerate}}
\newcommand{\beni}{\renewcommand{\labelenumi}{\roman{enumi}.}
\renewcommand{\theenumi}{\roman{enumi}}\begin{enumerate}}
\newcommand{\eeni}{\end{enumerate}}
\newcommand{\bena}{\renewcommand{\labelenumi}{\alph{enumi}.}
\renewcommand{\theenumi}{\alph{enumi}}\begin{enumerate}}
\newcommand{\eena}{\end{enumerate}}
\newcommand{\bit}{\begin{itemize}}
\newcommand{\eit}{\end{itemize}}
\newcommand{\bthe}{\begin{theorem}}
\newcommand{\ethe}{\end{theorem}}
\newcommand{\blem}{\begin{lemma}}
\newcommand{\elem}{\end{lemma}}
\newcommand{\bprop}{\begin{proposition}}
\newcommand{\eprop}{\end{proposition}}
\newcommand{\bex}{\begin{example}}
\newcommand{\eex}{\end{example}}
\newcommand{\bas}{\begin{assumption}}
\newcommand{\eas}{\end{assumption}}
\newcommand{\bre}{\begin{remark}}
\newcommand{\ere}{\end{remark}}
\newcommand{\bcor}{\begin{corollary}}
\newcommand{\ecor}{\end{corollary}}
\newcommand{\bdfn}{\begin{definition}}
\newcommand{\edfn}{\end{definition}}
\newcommand{\bcon}{\begin{conjecture}}
\newcommand{\econ}{\end{conjecture}}
\title{\LARGE \bf
Synthesis of Dissipative Systems Using Input-State Data
}
\author{Encho T. Nguyen
\thanks{Encho T. Nguyen is a student at the University of Groningen, The Netherlands. {\tt\small n.t.nguyen.6@student.rug.nl}} and Henk J. van Waarde
\thanks{Henk J. van Waarde is with the Bernoulli Institute for Mathematics, Computer Science, and Artificial Intelligence, University of Groningen, Nijenborgh~9, 9747 AG, Groningen, The Netherlands. {\tt\small h.j.van.waarde@rug.nl}}
}
\begin{document}

\maketitle
\thispagestyle{empty}
\pagestyle{empty}

\begin{abstract}
This paper deals with the data-driven synthesis of dissipative linear systems in discrete time. We collect finitely many noisy data samples with which we synthesise a controller that makes all systems that explain the data dissipative with respect to a given quadratic supply rate. By adopting the informativity approach, we introduce the notion of informativity for closed-loop dissipativity. Under certain assumptions on the noise and the system, with the help of tools for quadratic matrix inequalities, we provide necessary and sufficient conditions for informativity for closed-loop dissipativity. We also provide a recipe to design suitable controllers by means of data-based linear matrix inequalities. This main result comprises two parts, to account for both the cases that the output matrices are known or unknown. Lastly, we illustrate our findings with an example, for which we want to design a data-driven controller achieving (strict) passivity.
\end{abstract}

\section{Introduction}
Motivated by the increasing complexity of modern engineering systems and the abundance of available data, there has been a recent surge of interest in data-driven analysis and control. In many cases, the data do not give rise to a unique mathematical model of the system, which makes it appealing to avoid system identification and work with direct data-driven control methods instead \protect\cite{vanWaarde-2023-informativityapproach, DePersis-2020-datadrivencontrol, Coulson-2019-DeePC}. However, the success of any data-driven controller can only be guaranteed if the collected data are ``sufficiently rich". Because of this, the concept of \emph{data informativity} has been introduced (see \protect\cite{vanWaarde-2023-informativityapproach}), but conditions for informativity vary from problem to problem.

Over the years, the notion of \emph{dissipativity}, introduced by Jan C. Willems in \protect\cite{Willems-1972-dissipativitypart1} and \protect\cite{Willems-1972-dissipativitypart2}, has proven itself to be one of the most important concepts in systems and control, that is inseparable from modelling (physical) dynamical systems and controller synthesis. A system is called dissipative if the rate of change of the stored energy in the system does not exceed the supplied energy. Originally, the stored energy is expressed as a function of the state of the system, but, later on, Willems and Trentelman expanded upon the input-state-output framework by developing the behavioural approach with quadratic differential forms laying its foundation \protect\cite{Willems-1998-QDFs}. As a prime example of its application, in the two-part paper \protect\cite{WillemsTrentelman-2002-dissipativesystemsQDFspart1, TrentelmanWillems-2002-dissipativesystemsQDFspart2} the $\mathcal{H}_{\infty}$ control problem was posed and solved in a behavioural context.

Dissipativity for a linear system with a quadratic supply rate can be verified by the dissipation inequality, which can be rewritten as a linear matrix inequality (LMI) involving the system model. In the case that the system matrices are unknown, a number of papers have focused on the problem of verifying dissipativity properties from data. We mention the contributions \protect\cite{Maupong-2017-datadrivendissipativity}, \protect\cite{Romer-2017-iodissipationinequalities, Romer-2019-iodatadrivendissipativity, Koch-2022-datadrivendissipativity} and \protect\cite{vanWaarde-2022-isodatadrivendissipativity} that have tackled this problem in various scenarios involving input-state-output and input-output data that are either exact or noisy.

While \protect\cite{Maupong-2017-datadrivendissipativity, Romer-2017-iodissipationinequalities, Romer-2019-iodatadrivendissipativity, Koch-2022-datadrivendissipativity, vanWaarde-2022-isodatadrivendissipativity} focus on the analysis of dissipativity properties, in this paper, we are interested in designing controllers that achieve dissipative closed-loop behaviour. We have access to a batch of noisy input-state measurements obtained from the true, data-generating system, where we assume that the noise satisfies a quadratic matrix inequality (QMI). This leads to a set of dynamical systems that are consistent with the data. Our goal is to design a single controller that makes all consistent closed-loop systems dissipative with respect to a given quadratic supply rate. If this is possible, we call the data \emph{informative for closed-loop dissipativity}. We work with general quadratic supply rates that satisfy a certain inertia assumption. The strength of this approach lies in the fact that appropriate choices of the supply rate lead to different relevant control problems such as data-driven feedback passivation and $\mathcal{H}_{\infty}$ control, the latter of which has already received attention in \protect\cite{vanWaarde-2023-informativityapproach, Berberich-2020-datadrivenrobustcontrol} and \protect\cite{Steentjes-2022-iocrosscovariancebounds}.

In particular, the contributions of this paper are as follows:
\begin{enumerate}
    \item We define the concept of \emph{informativity for closed-loop dissipativity} for data obtained from an input-state-output system (Definitions \ref{informativitydefinitionknown} and \ref{informativitydefinitionunknown}).
    \item In Theorem \ref{maintheorem1}, we obtain necessary and sufficient LMI conditions under which noisy data are informative for closed-loop dissipativity.
    The theorem allows the use of prior knowledge on how the noise affects the dynamics.
    Furthermore, we provide an explicit formula for a controller. We consider two cases -- the output and feedthrough matrices are either unknown or known. In the case that they are known, we make use of an additional projection result, namely, Proposition \ref{PiW}.
\end{enumerate}

The outline of this paper is as follows. In Section \ref{section2} we recapitulate some results regarding QMIs and dissipativity. In Section \ref{section3} we formulate the problem. Afterwards, with the help of the preliminary results from Section \ref{section4}, we formulate and prove our main result in Section \ref{section5}. In Section \ref{section6} we consider an example of data-driven feedback passivation. Finally, the conclusions are provided in Section \ref{section7}.

\subsection{Notation}
The \emph{Moore-Penrose pseudo-inverse} of a real matrix $A$ is denoted by $A^{\dagger}$. The \emph{set of real symmetric $n \times n$ matrices} is denoted by $\mathbb{S}^n$. The \emph{inertia} of a symmetric matrix $A$ is denoted by $\inertia (A) = (\rho_{-}, \rho_{0}, \rho_{+})$, where $\rho_{-}$, $\rho_{0}$ and $\rho_{+}$ are the number of negative, zero and positive eigenvalues of $A$, respectively. Let $A \in \mathbb{S}^n$. If $x^TAx>0$, for all nonzero $x \in \mathbb{R}^n$, then $A$ is called \emph{positive definite}, denoted by $A>0$. If $x^TAx \geq 0$ for all $x \in \mathbb{R}^n$, then $A$ is called \emph{positive semidefinite}, denoted by $A \geq 0$. \emph{Negative definiteness} and \emph{negative semidefiniteness} are defined similarly and denoted by $A<0$ and $A \leq 0$, respectively. The inequality $A>B$ implies that $A-B>0$. In addition, $A \geq B$, $A<B$ and $A \leq B$ are defined similarly. The $n \times m$ zero matrix is denoted by $0_{n \times m}$ and the $n \times n$ identity matrix by $I_n$. The subscripts are omitted whenever the size is clear from the context. The \emph{interior} of a set $V$ is denoted by $\inte(V)$.

\section{Preliminaries} \label{section2}
In this section, we review some results on quadratic matrix inequalities that we use throughout the paper and we recap the concept of dissipativity. For more details and proofs, we refer to \protect\cite{vanWaarde-2022-isodatadrivendissipativity} and \protect\cite{vanWaarde-2023-QMIs}.
\subsection{Sets Induced by QMIs}
For reasons that become clear in the next section, we are interested in the set
\begin{equation*}
    \mathcal{Z}_r(\Pi) \coloneqq \left\{ Z \in \mathbb{R}^{r \times q} \mid \begin{bmatrix} I_q \\ Z \end{bmatrix}^T \Pi \begin{bmatrix} I_q \\ Z \end{bmatrix} \geq 0 \right\},
\end{equation*}
where $\Pi \in \mathbb{S}^{q+r}$ is partitioned as
\begin{equation*}
    \begin{bmatrix} \Pi_{11} & \Pi_{12} \\ \Pi_{21} & \Pi_{22} \end{bmatrix}
\end{equation*}
with $\Pi_{11} \in \mathbb{S}^{q}$ and $\Pi_{22} \in \mathbb{S}^{r}$. Throughout the paper, $\Pi \in \mathbb{S}^{q+r}$ implies this particular partitioning. The set $\mathcal{Z}_r(\Pi)$ is nonempty and convex if the following three conditions hold:
\begin{equation} \label{nonemptyconvexconditions}
    \Pi_{22} \leq 0, \ \Pi|\Pi_{22} \geq 0, \ \ker \Pi_{22} \subseteq \ker \Pi_{12},
\end{equation}
where $\Pi|\Pi_{22} \coloneqq \Pi_{11} - \Pi_{12}\Pi_{22}^{\dagger}\Pi_{21}$ denotes the (generalised) Schur complement of $\Pi$ with respect to $\Pi_{22}$. Define the set
\begin{equation*}
    \boldsymbol{\Pi}_{q,r} \coloneqq \left\{ \Pi \in \mathbb{S}^{q+r} \mid \text{(\ref{nonemptyconvexconditions})} \ \text{hold} \right\}.
\end{equation*}
Given that $\Pi \in \boldsymbol{\Pi}_{q,r}$, the set $\mathcal{Z}_r(\Pi)$ is bounded if and only if $\Pi_{22}<0$.

For $W \in \mathbb{R}^{q \times p}$, $\mathcal{S} \subseteq \mathbb{R}^{r \times q}$ and $\Pi \in \mathbb{S}^{q+r}$, we define $\mathcal{S}W \coloneqq \{SW \colon S \in \mathcal{S}\}$ and
\begin{equation*}
    \Pi_W \coloneqq \begin{bmatrix} W^T & 0 \\ 0 & I_r \end{bmatrix} \Pi \begin{bmatrix} W & 0 \\ 0 & I_r \end{bmatrix} \in \mathbb{S}^{p+r}.
\end{equation*}
If $\Pi \in \boldsymbol{\Pi}_{q,r}$, then $\Pi_W \in \boldsymbol{\Pi}_{p,r}$.
\begin{proposition} \label{PiW}
    Let $\Pi \in \boldsymbol{\Pi}_{q,r}$ and $W \in \mathbb{R}^{q \times p}$. If either $W$ has full column rank or $\Pi_{22}$ is nonsingular, then $\mathcal{Z}_r(\Pi)W = \mathcal{Z}_r(\Pi_W)$.
\end{proposition}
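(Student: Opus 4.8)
The plan is to prove the two inclusions that together give the set equality. The inclusion $\calZ_r(\Pi)W\subseteq\calZ_r(\Pi_W)$ needs no extra hypothesis: for $Z\in\calZ_r(\Pi)$ the identity $\begin{bmatrix} W & 0\\ 0 & I_r\end{bmatrix}\begin{bmatrix} I_p\\ ZW\end{bmatrix}=\begin{bmatrix} I_q\\ Z\end{bmatrix}W$ gives $\begin{bmatrix} I_p\\ ZW\end{bmatrix}^T\Pi_W\begin{bmatrix} I_p\\ ZW\end{bmatrix}=W^T\!\left(\begin{bmatrix} I_q\\ Z\end{bmatrix}^T\Pi\begin{bmatrix} I_q\\ Z\end{bmatrix}\right)\!W\geq 0$ by congruence, so $ZW\in\calZ_r(\Pi_W)$. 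All the work is in the reverse inclusion $\calZ_r(\Pi_W)\subseteq\calZ_r(\Pi)W$, which is where the hypotheses are used. Fixing $Y\in\calZ_r(\Pi_W)$, the same congruence identity turns its defining inequality into $\begin{bmatrix} W\\ Y\end{bmatrix}^T\Pi\begin{bmatrix} W\\ Y\end{bmatrix}\geq 0$, and the task is to construct $Z\in\calZ_r(\Pi)$ with $ZW=Y$.

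Next I would rewrite both membership conditions by completing the square. Put $M:=-\Pi_{22}\geq 0$, $N:=\Pi|\Pi_{22}\geq 0$ and $Z^{*}:=-\Pi_{22}^{\dagger}\Pi_{21}$. The assumption $\ker\Pi_{22}\subseteq\ker\Pi_{12}$ is exactly what yields $\Pi_{22}\Pi_{22}^{\dagger}\Pi_{21}=\Pi_{21}$, and hence the generalised-Schur identities $\begin{bmatrix} I_q\\ Z\end{bmatrix}^T\Pi\begin{bmatrix} I_q\\ Z\end{bmatrix}=N-(Z-Z^{*})^TM(Z-Z^{*})$ and $\begin{bmatrix} W\\ Y\end{bmatrix}^T\Pi\begin{bmatrix} W\\ Y\end{bmatrix}=W^TNW-(Y-Z^{*}W)^TM(Y-Z^{*}W)$. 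Writing $\widetilde Z:=Z-Z^{*}$ and $\widetilde Y:=Y-Z^{*}W$ (the centres are compatible), the reverse inclusion reduces to: given $\widetilde Y^TM\widetilde Y\leq W^TNW$, find $\widetilde Z$ with $\widetilde Z W=\widetilde Y$ and $\widetilde Z^TM\widetilde Z\leq N$.

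The core of the argument is the following weight-free claim: if $N\geq 0$ and $V^TV\leq W^TNW$, then there is a $U$ with $UW=V$ and $U^TU\leq N$. I would prove it explicitly by factoring $N=F^TF$ and setting $\Phi:=FW$, so that the hypothesis reads $V^TV\leq\Phi^T\Phi$; this forces $\ker\Phi\subseteq\ker V$, so $J:=V\Phi^{\dagger}$ satisfies $J\Phi=V$ and $J^TJ\leq\Phi\Phi^{\dagger}\leq I$, and then $U:=JF$ obeys $UW=J\Phi=V$ and $U^TU=F^TJ^TJF\leq F^TF=N$. This is the construction that ultimately produces the explicit controller.

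It remains to reduce the two cases of the proposition to this claim, which is the delicate part. If $\Pi_{22}<0$, then $M>0$, the substitution $U=M^{1/2}\widetilde Z$, $V=M^{1/2}\widetilde Y$ is invertible, and the claim applies directly (recovering $\widetilde Z=M^{-1/2}U$). If instead $W$ has full column rank, $M$ may be singular; here I would split the rows of $\widetilde Z$ according to an orthonormal basis adapted to $\R^{r}=\im M\oplus\ker M$, writing $M=\diag(M',0)$ with $M'>0$. Both $\widetilde Z^TM\widetilde Z\leq N$ and the given inequality involve only the $\im M$ block, to which the claim applies, whereas the $\ker M$ rows of $\widetilde Z$ must satisfy merely the linear equation $\widetilde Z^{(2)}W=\widetilde Y^{(2)}$, which is solvable precisely because $\ker W=\{0\}$ (e.g.\ $\widetilde Z^{(2)}=\widetilde Y^{(2)}W^{\dagger}$). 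Thus the two hypotheses do exactly one job each: positive definiteness of $\Pi_{22}$ eliminates the kernel rows, while full column rank of $W$ keeps their linear constraint consistent. I expect the main obstacle to be the singular-$\Pi_{22}$ bookkeeping, ensuring the centred Schur identities remain valid with $\Pi_{22}^{\dagger}$ in place of an inverse, since every later estimate rests on them.
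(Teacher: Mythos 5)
The paper does not actually prove Proposition~\ref{PiW}; it is quoted from the cited QMI references, so there is no in-paper argument to compare against. Your proof is correct and self-contained, and it essentially reconstructs the standard argument from those references: the easy inclusion $\mathcal{Z}_r(\Pi)W\subseteq\mathcal{Z}_r(\Pi_W)$ by congruence, and the reverse inclusion via the generalized-Schur completion of squares around $Z^{*}=-\Pi_{22}^{\dagger}\Pi_{21}$ (where $\ker\Pi_{22}\subseteq\ker\Pi_{12}$ is exactly what makes the pseudo-inverse identities work), reduced to a contraction-extension step. Your core claim and its proof check out: $V^TV\leq\Phi^T\Phi$ with $\Phi=FW$ forces $\ker\Phi\subseteq\ker V$, so $J=V\Phi^{\dagger}$ satisfies $J\Phi=V$ and $J^TJ\leq\Phi\Phi^{\dagger}\leq I$, whence $U=JF$ has $UW=V$ and $U^TU\leq F^TF=N$. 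The case split is also handled correctly: nonsingular $\Pi_{22}$ makes $M>0$ so the substitution $U=M^{1/2}\widetilde Z$ is invertible, while full column rank of $W$ is needed only to solve the unconstrained linear equation $\widetilde Z^{(2)}W=\widetilde Y^{(2)}$ on the $\ker M$ rows. No gaps.
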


\subsection{Matrix Version of the S-lemma}
We recall necessary and sufficient conditions under which all solutions to one QMI also satisfy another QMI.
\begin{proposition} \label{dissmainproof1}
    Let $M, N \in \mathbb{S}^{q+r}$. Assume that $N \in \boldsymbol{\Pi}_{q,r}$ and $N$ has at least one positive eigenvalue. Then $\mathcal{Z}_r(N) \subseteq \mathcal{Z}_r(M)$ if and only if there exists a real $\alpha \geq 0$ such that $M-\alpha N \geq 0$.
\end{proposition}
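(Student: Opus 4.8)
The plan is to prove the two implications separately, with the ``if'' direction being routine and the ``only if'' direction carrying all the difficulty. For sufficiency, suppose $\alpha \geq 0$ satisfies $M - \alpha N \geq 0$. For any $Z \in \mathcal{Z}_r(N)$ I would simply split
\[
\begin{bmatrix} I_q \\ Z \end{bmatrix}^T M \begin{bmatrix} I_q \\ Z \end{bmatrix} = \begin{bmatrix} I_q \\ Z \end{bmatrix}^T (M-\alpha N) \begin{bmatrix} I_q \\ Z \end{bmatrix} + \alpha \begin{bmatrix} I_q \\ Z \end{bmatrix}^T N \begin{bmatrix} I_q \\ Z \end{bmatrix},
\]
and note that the first summand is positive semidefinite by congruence from $M-\alpha N \geq 0$, while the second is positive semidefinite because $\alpha \geq 0$ and $Z \in \mathcal{Z}_r(N)$. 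Hence $Z \in \mathcal{Z}_r(M)$, giving $\mathcal{Z}_r(N) \subseteq \mathcal{Z}_r(M)$.

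For necessity I would recast the existence of $\alpha$ as a membership problem. Define the convex cone $\mathcal{K} := \{ P + \alpha N \mid P \in \mathbb{S}^{q+r},\ P \geq 0,\ \alpha \geq 0 \} \subseteq \mathbb{S}^{q+r}$; then ``there exists $\alpha \geq 0$ with $M - \alpha N \geq 0$'' is precisely ``$M \in \mathcal{K}$''. The first substantive step is to show $\mathcal{K}$ is \emph{closed}, and this is exactly where the hypothesis that $N$ has a positive eigenvalue is used: if $P_k + \alpha_k N \to Q$ with $\alpha_k \to \infty$, then dividing by $\alpha_k$ forces $-N = \lim P_k/\alpha_k \geq 0$ by closedness of the positive semidefinite cone, contradicting the positive eigenvalue of $N$; so the multipliers stay bounded and a routine limit argument places $Q$ in $\mathcal{K}$.

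Arguing by contraposition, assume no such $\alpha$ exists, so $M \notin \mathcal{K}$. Since $\mathcal{K}$ is a closed convex cone and the trace pairing $\langle A, B \rangle := \trace(AB)$ makes $\mathbb{S}^{q+r}$ self-dual, a separating-hyperplane argument produces $X \in \mathbb{S}^{q+r}$ with $\langle X, K \rangle \geq 0$ for all $K \in \mathcal{K}$ and $\langle X, M \rangle < 0$. Testing the first inequality against all $P \geq 0$ forces $X \geq 0$, and testing against $\alpha N$ forces $\langle X, N \rangle \geq 0$. The goal now becomes to manufacture from this dual certificate a genuine $Z \in \mathcal{Z}_r(N)$ with $Z \notin \mathcal{Z}_r(M)$, which would contradict $\mathcal{Z}_r(N) \subseteq \mathcal{Z}_r(M)$. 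I expect this conversion to be the \textbf{main obstacle}: $X$ is an arbitrary positive semidefinite ``mixture'' of directions, whereas a witness $Z$ corresponds to the single graph subspace $\{(v, Zv) \mid v \in \mathbb{R}^q\}$, so one cannot read $Z$ off from $X$ directly, and this gap is precisely what separates the matrix S-lemma from the scalar S-procedure.

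To close the gap I would exploit the structure $N \in \boldsymbol{\Pi}_{q,r}$. A natural route is first to reduce to the case $N_{22} < 0$ by factoring out $\ker N_{22}$, which is legitimate because the inclusion $\ker N_{22} \subseteq \ker N_{12}$ in \eqref{nonemptyconvexconditions} guarantees that this restriction loses no information; in that case $\mathcal{Z}_r(N)$ is bounded with nonempty interior, giving the compactness needed to attain extremal witnesses. Writing $\langle N, X \rangle = \trace\big(N(Z)\,S\big)$ whenever $X = \begin{bmatrix} I_q \\ Z \end{bmatrix} S \begin{bmatrix} I_q \\ Z \end{bmatrix}^T$ with $S \geq 0$ (and similarly for $M$) identifies the form $X$ must take, so the technical heart is a rank-one decomposition of the certificate $X$ that respects the two sign constraints $\langle N, X \rangle \geq 0$ and $\langle M, X \rangle < 0$ while delivering a single admissible graph direction $b = Za$. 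Controlling this extraction — ensuring the selected direction is a genuine graph vector and preserves the strict sign on $M$ — is where the nonemptiness, convexity, and boundedness properties encoded in $N \in \boldsymbol{\Pi}_{q,r}$, together with the positive-eigenvalue assumption, do the decisive work.
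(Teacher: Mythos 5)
Your ``if'' direction is correct and is the standard congruence argument. The ``only if'' direction, however, has a genuine gap at exactly the point you yourself flag as the main obstacle, and flagging it is not the same as closing it. The conic-duality setup is sound: the closedness of $\mathcal{K}$ via the positive eigenvalue of $N$, the separation argument, and the deduction that $X \geq 0$, $\trace(XN) \geq 0$, $\trace(XM) < 0$ all check out. But everything that makes this a \emph{matrix} S-lemma rather than the scalar S-procedure lives in the step you leave as a sketch: producing from the dual certificate $X$ an actual $Z \in \mathcal{Z}_r(N)$ with $Z \notin \mathcal{Z}_r(M)$. A rank-one decomposition of $X$ compatible with the two sign constraints (a Sturm--Zhang-type result, itself nontrivial) would only hand you a vector $w = (a,b)$ with $w^T N w \geq 0$ and $w^T M w < 0$; you would still need to show that $b = Za$ for some $Z$ whose \emph{entire graph subspace} is $N$-nonnegative, and you would need to handle the degenerate direction $a = 0$. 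Without that, no contradiction with $\mathcal{Z}_r(N) \subseteq \mathcal{Z}_r(M)$ is obtained, so as written the argument does not go through.

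For comparison: the paper does not prove this proposition itself; it quotes it from \cite{vanWaarde-2022-isodatadrivendissipativity} and \cite{vanWaarde-2023-QMIs}, and the proof given there avoids conic duality altogether. Its key technical ingredient is a \emph{completion} lemma: under the hypothesis $N \in \boldsymbol{\Pi}_{q,r}$, every vector $(a,b)$ with $a \neq 0$ and $\begin{bmatrix} a \\ b \end{bmatrix}^T N \begin{bmatrix} a \\ b \end{bmatrix} \geq 0$ satisfies $b = Za$ for some $Z \in \mathcal{Z}_r(N)$ (after reducing to $N_{22} < 0$ one can take $Z = -N_{22}^{-1}N_{21} + uv^T$ with $u = b + N_{22}^{-1}N_{21}a$ and $v$ proportional to $(N\schur N_{22})\,a$), while the case $a = 0$ is handled using $\ker N_{22} \subseteq \ker N_{12}$. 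This converts the set inclusion $\mathcal{Z}_r(N) \subseteq \mathcal{Z}_r(M)$ into the pointwise implication $w^T N w \geq 0 \Rightarrow w^T M w \geq 0$, after which the classical S-lemma, whose strict-feasibility hypothesis is supplied by the positive eigenvalue of $N$, delivers the multiplier $\alpha$. If you want to salvage your duality route, this same completion lemma is what you would have to prove anyway, at which point the separation machinery becomes redundant; I would recommend restructuring the necessity proof around the completion lemma directly.
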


\subsection{Dissipativity of Discrete-Time Linear Systems}
Consider a linear discrete-time input-state-output system
\begin{align} \label{noiselessdiss}
\begin{split}
    \textbf{x}(t+1)&=A\textbf{x}(t)+B\textbf{u}(t), \\
    \textbf{y}(t)&=C\textbf{x}(t)+D\textbf{u}(t),
\end{split}
\end{align}
where $A \in \mathbb{R}^{n \times n}$, $B \in \mathbb{R}^{n \times m}$, $C \in \mathbb{R}^{p \times n}$, $D \in \mathbb{R}^{p \times m}$ and $(\textbf{u}, \textbf{x}, \textbf{y}) \colon \mathbb{N} \to \mathbb{R}^{m + n + p}$.
\begin{definition}
    Let $S \in \mathbb{S}^{m+p}$. We call system (\ref{noiselessdiss}), or the quadruple $(A, B, C, D)$, \emph{dissipative with respect to the supply rate}
    \begin{equation} \label{supplyrate}
        s(u,y)= \begin{bmatrix} u \\ y \end{bmatrix}^T S \begin{bmatrix} u \\ y \end{bmatrix}
    \end{equation}
    if there exists a matrix $P \in \mathbb{S}^{n}$ such that $P \geq 0$ and the dissipation inequality
    \begin{equation} \label{dissineq}
        \textbf{x}(t)^T P \textbf{x}(t) + s(\textbf{u}(t),\textbf{y}(t)) \geq \textbf{x}(t+1)^T P \textbf{x}(t+1)
    \end{equation}
    holds for all $t \geq 0$ and all trajectories $(\textbf{u}, \textbf{x}, \textbf{y})$ of (\ref{noiselessdiss}).
\end{definition}

We can rewrite (\ref{dissineq}) as
\begin{equation} \label{disslmi}
    \begin{bmatrix} I & 0 \\ A & B \end{bmatrix}^T \begin{bmatrix} P & 0 \\ 0 & -P \end{bmatrix} \begin{bmatrix} I & 0 \\ A & B \end{bmatrix} + \begin{bmatrix} 0 & I \\ C & D \end{bmatrix}^T S \begin{bmatrix} 0 & I \\ C & D \end{bmatrix} \geq 0,
\end{equation}
so asking for system (\ref{noiselessdiss}) to be dissipative with respect to the supply rate (\ref{supplyrate}) is equivalent to requiring the feasibility of the LMIs $P \geq 0$ and (\ref{disslmi}).

\section{Problem Formulation} \label{section3}
Now consider the system
\begin{subequations} \label{noisediss}
    \begin{align}
        \textbf{x}(t+1)&=A_s\textbf{x}(t)+B_s\textbf{u}(t)+E\textbf{w}(t), \label{noisediss1}\\
        \textbf{y}(t)&=C_s\textbf{x}(t)+D_s\textbf{u}(t)+F\textbf{w}(t), \label{noisediss2}
    \end{align}
\end{subequations}
where $(A_s, B_s, C_s, D_s)$ denote the ``true'' system matrices. The state and input matrices $(A_s, B_s)$ and the noise term $\textbf{w} \in \mathbb{R}^d$ are unknown, whereas $(E, F) \in \mathbb{R}^{n \times d} \times \mathbb{R}^{p \times d}$ are known. The matrices $E$ and $F$ capture our prior knowledge on how the noise affects the dynamics of the system. If we lack such knowledge, then we can take them to be the identity matrix. We derive results for two scenarios. Namely, the cases that the matrices $C_s$ and $D_s$ are either known or unknown. The goal of this paper is to find a matrix $K \in \mathbb{R}^{m \times n}$ such that the static state feedback controller $\textbf{u}(t)=K\textbf{x}(t)$ makes the closed-loop system $(A_s\!+\!B_sK, E, C_s\!+\!D_sK, F)$ dissipative with respect to the supply rate
\begin{equation} \label{supplyratenoise}
    s(w,y) = \begin{bmatrix} w \\ y \end{bmatrix}^T S \begin{bmatrix} w \\ y \end{bmatrix},
\end{equation}
where $S \in \mathbb{S}^{d + p}$.
\begin{assumption} \label{Assumption: Inertia of S}
    The matrix $S$ has inertia $\inertia (S) = (p, 0, d)$.
\end{assumption}
Examples that satisfy this condition include the passive supply rate (for $d=p$) and the $\ell_2$-gain supply rate, where $S$ takes the form
\begin{equation*}
    \begin{bmatrix} 0 & I_d \\ I_d & 0 \end{bmatrix} \text{ and } \begin{bmatrix} \gamma^2 I_d & 0 \\ 0 & -I_p \end{bmatrix},
\end{equation*}
respectively, with $\gamma > 0$.

\begin{remark}
    Although both (\ref{noisediss1}) and (\ref{noisediss2}) use the same noise term $\textbf{w}$, system (\ref{noisediss}) is considered a generalisation of the case where there are different process and measurement noise terms. Indeed,
    \begin{align*}
        \textbf{x}(t+1)&=A_s\textbf{x}(t)+B_s\textbf{u}(t)+E\textbf{v}(t), \\
        \textbf{y}(t)&=C_s\textbf{x}(t)+D_s\textbf{u}(t)+F\textbf{z}(t)
    \end{align*}
    can be rewritten as
    \begin{align*}
        \textbf{x}(t+1)&=A_s\textbf{x}(t)+B_s\textbf{u}(t)+\begin{bmatrix} E & 0 \end{bmatrix} \begin{bmatrix} \textbf{v}(t) \\ \textbf{z}(t) \end{bmatrix}, \\
        \textbf{y}(t)&=C_s\textbf{x}(t)+D_s\textbf{u}(t)+\begin{bmatrix} 0 & F \end{bmatrix} \begin{bmatrix} \textbf{v}(t) \\ \textbf{z}(t) \end{bmatrix},
    \end{align*}
    which is again of the form (\ref{noisediss}) by taking $\textbf{w} = \begin{bmatrix} \textbf{v}^T & \textbf{z}^T \end{bmatrix}^T$.
\end{remark}

We collect finitely many input-state measurements of (\ref{noisediss}) in the matrices
\begin{align*}
    U_{-} &\coloneqq \begin{bmatrix} u(0) & u(1) & \cdots & u(T-1) \end{bmatrix}, \\
    X &\coloneqq \begin{bmatrix} x(0) & x(1) & \cdots & x(T) \end{bmatrix}.
\end{align*}
In addition, we define the following matrices:
\begin{align*}
    X_{-} &\coloneqq \begin{bmatrix} x(0) & x(1) & \cdots & x(T-1) \end{bmatrix}, \\
    X_{+} &\coloneqq \begin{bmatrix} x(1) & x(2) & \cdots & x(T) \end{bmatrix}.
\end{align*}
For the noise, we make the following assumption on $W_{-} \coloneqq \begin{bmatrix} w(0) & w(1) & \cdots & w(T-1) \end{bmatrix}$.
\begin{assumption}
    The noise matrix $W_{-}$ satisfies
\begin{equation} \label{noisemodeldiss}
    \begin{bmatrix} I \\ W_{-}^T \end{bmatrix}^T \Phi \begin{bmatrix} I \\ W_{-}^T \end{bmatrix} \geq 0,
\end{equation}
i.e. $W_{-}^T \in \mathcal{Z}_T(\Phi)$, with $\Phi \in \boldsymbol{\Pi}_{d,T}$ and $\Phi_{22} < 0$.
\end{assumption}
With an appropriate choice of $\Phi$, the noise model can capture various assumptions on the noise such as energy bounds, sample covariance bounds, etc. (see \protect\cite{vanWaarde-2023-QMIs}). Then, from (\ref{noisediss1}) we have that $X_{+} = A_sX_{-} + B_sU_{-} + EW_{-}$. We first consider the setting where $C_s$ and $D_s$ are known. We denote the set of systems consistent with the data, i.e. all $(A, B)$ satisfying
\begin{equation} \label{sigmaknown}
     X_{+} = AX_{-} + BU_{-} + EW_{-}
\end{equation}
for some $W_{-}$ satisfying (\ref{noisemodeldiss}), by
\begin{equation*}
     \Sigma_k \coloneqq \left\{ (A, B) \mid \text{(\ref{sigmaknown})} \; \text{holds for some} \; W_{-}^T \in \mathcal{Z}_T(\Phi) \right\}.
\end{equation*}
Clearly, $(A_s, B_s) \in \Sigma_k$. Because we cannot distinguish the true system from other systems in $\Sigma_k$, our controller should make all systems in $\Sigma_k$ dissipative with respect to (\ref{supplyratenoise}), inspiring the informativity approach.
\begin{definition}\label{informativitydefinitionknown}
    The data $(U_{-}, X)$ are called \emph{informative for closed-loop dissipativity} with respect to the supply rate (\ref{supplyratenoise}) if there exist matrices $K$ and $P \geq 0$ such that
    \begin{equation} \label{preknownverdissineq}
        \! \begin{bmatrix} I & 0 \\ \mathcal{A} & E \end{bmatrix}^T \begin{bmatrix} P & 0 \\ 0 & -P \end{bmatrix} \begin{bmatrix} I & 0 \\ \mathcal{A} & E \end{bmatrix} + \begin{bmatrix} 0 & I \\ \mathcal{C}_s & F \end{bmatrix}^T S \begin{bmatrix} 0 & I \\ \mathcal{C}_s & F \end{bmatrix} \geq 0
    \end{equation}
    for all $(A, B) \in \Sigma_k$, where $\mathcal{A} \coloneqq A + BK$ and $\mathcal{C}_s \coloneqq C_s + D_sK$.
\end{definition}

Next, we turn our attention to the case that $C_s$ and $D_s$ are unknown. We collect finitely many output measurements of (\ref{noisediss}) in the matrix
\begin{equation*}
    Y_{-} \coloneqq \begin{bmatrix} y(0) & y(1) & \cdots & y(T-1) \end{bmatrix}.
\end{equation*}
Consider the system of linear equations
\begin{equation} \label{sigmaunknown}
    \begin{bmatrix} X_{+} \\ Y_{-} \end{bmatrix} = \begin{bmatrix} A & B \\ C & D \end{bmatrix} \begin{bmatrix} X_{-} \\ U_{-} \end{bmatrix} + \begin{bmatrix} E \\ F \end{bmatrix}W_{-}
\end{equation}
in the unknown $(A, B, C, D)$. The set of systems explaining the data is now given by
\begin{equation*}
    \Sigma_u \coloneqq \left\{ (A, B, C, D) \mid \text{(\ref{sigmaunknown})} \; \text{holds for some} \; W_{-}^T \in \mathcal{Z}_T(\Phi) \right\} \!.
\end{equation*}
Obviously, $(A_s, B_s, C_s, D_s) \in \Sigma_u$.
\begin{definition} \label{informativitydefinitionunknown}
    The data $(U_{-}, X, Y_{-})$ are called \emph{informative for closed-loop dissipativity} with respect to the supply rate (\ref{supplyratenoise}) if there exist matrices $K$ and $P \geq 0$ such that
    \begin{equation} \label{preunknownverdissineq}
        \begin{bmatrix} I & 0 \\ \mathcal{A} & E \end{bmatrix}^T \begin{bmatrix} P & 0 \\ 0 & -P \end{bmatrix} \begin{bmatrix} I & 0 \\ \mathcal{A} & E \end{bmatrix} + \begin{bmatrix} 0 & I \\ \mathcal{C} & F \end{bmatrix}^T S \begin{bmatrix} 0 & I \\ \mathcal{C} & F \end{bmatrix} \geq 0
    \end{equation}
    for all $(A, B, C, D) \in \Sigma_u$, where $\mathcal{A} \coloneqq A + BK$ and $\mathcal{C} \coloneqq C + DK$. 
\end{definition}

In what follows, we require the following assumption on $\Sigma_k$ and $\Sigma_u$.
\begin{assumption}
    The sets $\Sigma_k$ and $\Sigma_u$ have nonempty interior.
\end{assumption}

\section{Preliminary Results} \label{section4}
In this section, we derive some results which inspire and help prove our main result.
\begin{lemma} \label{knownequivalenceconsistentset}
    We have that $(A, B) \in \Sigma_k$ if and only if $\begin{bmatrix} A & B \end{bmatrix}^T \in \mathcal{Z}_{n+m}(N_k)$, where
    \begin{equation*}
    N_k \coloneqq
    \begin{pmat}[{|}]
       I & X_{+} \cr \-
        0 & {\begin{matrix} -X_{-} \\ -U_{-} \end{matrix}} \cr
    \end{pmat}
    \begin{bmatrix} E & 0 \\ 0 & I \end{bmatrix} \Phi \begin{bmatrix} E^T & 0 \\ 0 & I \end{bmatrix}
    \begin{pmat}[{|}]
       I & X_{+} \cr \-
       0 & {\begin{matrix} -X_{-} \\ -U_{-} \end{matrix}} \cr
    \end{pmat}^T \! \! \!.
    \end{equation*}
\end{lemma}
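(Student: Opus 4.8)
The plan is to show that \emph{both} membership conditions are equivalent to a single quadratic matrix inequality on the \emph{residual} $R := X_{+} - AX_{-} - BU_{-}$, and then to bridge the two sides with Proposition~\ref{PiW}. The key observation is that the consistency requirement in the definition of $\Sigma_k$ can be read as the statement that $R^T$ lies in the \emph{image} $\mathcal{Z}_T(\Phi)E^T$ of the noise set under right multiplication by $E^T$, whereas $N_k$ has been built precisely so that $\begin{bmatrix} A & B \end{bmatrix}^T \in \mathcal{Z}_{n+m}(N_k)$ expresses membership of $R^T$ in the transformed set $\mathcal{Z}_T(\Phi_{E^T})$.

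First I would carry out the block multiplication defining $\mathcal{Z}_{n+m}(N_k)$. Writing $V := \begin{bmatrix} I_n \\ A^T \\ B^T \end{bmatrix}$, a direct computation shows that the transposed outer factor of $N_k$ sends $V$ to $\begin{bmatrix} I_n \\ X_{+}^T - X_{-}^T A^T - U_{-}^T B^T \end{bmatrix} = \begin{bmatrix} I_n \\ R^T \end{bmatrix}$, after which the factor $\begin{bmatrix} E^T & 0 \\ 0 & I \end{bmatrix}$ turns this into $\begin{bmatrix} E^T \\ R^T \end{bmatrix}$. Hence
\begin{equation*}
V^T N_k V = \begin{bmatrix} E^T \\ R^T \end{bmatrix}^T \Phi \begin{bmatrix} E^T \\ R^T \end{bmatrix}.
\end{equation*}
Recognising the right-hand side as the defining quadratic form of $\Phi_{E^T}$ evaluated at $R^T$ (take $W = E^T$ in the definition of $\Pi_W$), this establishes that $\begin{bmatrix} A & B \end{bmatrix}^T \in \mathcal{Z}_{n+m}(N_k)$ if and only if $R^T \in \mathcal{Z}_T(\Phi_{E^T})$.

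It remains to relate this to $\Sigma_k$. Unfolding the definition, $(A,B) \in \Sigma_k$ means exactly that there is some $W_{-}$ with $W_{-}^T \in \mathcal{Z}_T(\Phi)$ and $EW_{-} = R$; transposing the latter gives $R^T = W_{-}^T E^T$, i.e.\ $R^T \in \mathcal{Z}_T(\Phi)E^T$. The two characterisations are then joined once we know $\mathcal{Z}_T(\Phi)E^T = \mathcal{Z}_T(\Phi_{E^T})$, which is precisely Proposition~\ref{PiW} applied with $W = E^T$. Here the crucial point, and the main thing to verify, is the hypothesis of that proposition: $E^T$ need not have full column rank, but $\Phi_{22}$ is nonsingular because the standing assumption gives $\Phi_{22} < 0$, so Proposition~\ref{PiW} applies regardless. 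Chaining $(A,B)\in\Sigma_k \Leftrightarrow R^T \in \mathcal{Z}_T(\Phi)E^T \Leftrightarrow R^T \in \mathcal{Z}_T(\Phi_{E^T}) \Leftrightarrow \begin{bmatrix} A & B \end{bmatrix}^T \in \mathcal{Z}_{n+m}(N_k)$ completes the proof.

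The only genuinely delicate part is the first step: the block bookkeeping (the sizes of the zero blocks and the transposes attached to $X_{-}, U_{-}$) must be tracked carefully so that the outer factor really collapses to $\begin{bmatrix} I_n \\ R^T \end{bmatrix}$ and the sign conventions $-X_-,\,-U_-$ reproduce the residual with the correct sign. Once that identity is in hand, the remaining obstacle is purely conceptual, namely spotting that the consistency condition is an image of a QMI set and invoking Proposition~\ref{PiW}, whose applicability hinges entirely on $\Phi_{22} < 0$.
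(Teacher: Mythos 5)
Your proposal is correct and follows essentially the same route as the paper: characterise $\Sigma_k$ via the residual $R^T \in \mathcal{Z}_T(\Phi)E^T$, apply Proposition~\ref{PiW} with $W = E^T$ (justified by $\Phi_{22} < 0$) to pass to $\mathcal{Z}_T(\Phi_{E^T})$, and then identify this with the QMI defining $\mathcal{Z}_{n+m}(N_k)$ by the block computation. You in fact make explicit two details the paper leaves implicit, namely the outer-factor calculation and the verification of the nonsingularity hypothesis of Proposition~\ref{PiW}.
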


\vspace{5pt}

\begin{proof}
    By definition, $(A, B) \in \Sigma_k$ if and only if (\ref{sigmaknown}) holds for some $W_{-}^T \in \mathcal{Z}_T(\Phi)$. From Proposition \ref{PiW}, we have that $\mathcal{Z}_T(\Phi)E^T = \mathcal{Z}_T(\Phi_{E^T})$, where
    \begin{equation*}
        \Phi_{E^T} \coloneqq \begin{bmatrix} E & 0 \\ 0 & I \end{bmatrix} \Phi \begin{bmatrix} E^T & 0 \\ 0 & I \end{bmatrix}.
    \end{equation*}
    Therefore, $(A, B) \in \Sigma_k$ if and only if $(X_+ - AX_- - BU_-)^T \in \mathcal{Z}_T(\Phi_{E^T})$, equivalently,
    \begin{equation} \label{noisemodelknownver}
        \begin{bmatrix} I \\ A^T \\ B^T \end{bmatrix}^T N_k \begin{bmatrix} I \\ A^T \\ B^T \end{bmatrix} \geq 0.
    \end{equation}
    The latter inequality holds if and only if $\begin{bmatrix} A & B \end{bmatrix}^T \in \mathcal{Z}_{n+m}(N_k)$.
\end{proof}

\begin{lemma} \label{unknownequivalenceconsistentset}
    The system $(A, B, C, D) \in \Sigma_u$ if and only if
    \begin{equation*}
        \begin{bmatrix} A & B \\ C & D \end{bmatrix}^T \in \mathcal{Z}_{n+m}(N_u),
    \end{equation*}
    where
    \begin{equation*}
    N_u \coloneqq
    \begin{pmat}[{|}]
       I & {\begin{matrix} X_{+} \\ Y_{-} \end{matrix}} \cr \-
       0 & {\begin{matrix} -X_{-} \\ -U_{-} \end{matrix}} \cr
    \end{pmat}
    \begin{pmat}[{|}] {\begin{matrix} E \\ F \end{matrix}} & 0 \cr \- 0 & I \cr \end{pmat} \Phi \begin{pmat}[{|}] {\begin{matrix} E \\ F \end{matrix}} & 0 \cr \- 0 & I \cr \end{pmat}^T
    \begin{pmat}[{|}]
       I & {\begin{matrix} X_{+} \\ Y_{-} \end{matrix}} \cr \-
       0 & {\begin{matrix} -X_{-} \\ -U_{-} \end{matrix}} \cr
    \end{pmat}^T \! \! \!.
    \end{equation*}
\end{lemma}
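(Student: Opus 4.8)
The plan is to transcribe the proof of Lemma~\ref{knownequivalenceconsistentset} almost verbatim, since the unknown-output setting is structurally identical to the known case once we make the substitutions $E \mapsto \begin{bmatrix} E \\ F \end{bmatrix}$, $X_+ \mapsto \begin{bmatrix} X_+ \\ Y_- \end{bmatrix}$, and $\begin{bmatrix} A & B \end{bmatrix} \mapsto \begin{bmatrix} A & B \\ C & D \end{bmatrix}$. Concretely, equation (\ref{sigmaunknown}) is the exact analogue of (\ref{sigmaknown}), with the composite matrix $\begin{bmatrix} E \\ F \end{bmatrix}$ now premultiplying the noise block $W_-$. So the whole argument should go through with $n$-dimensional objects replaced by their $(n+p)$-dimensional stacked counterparts, and the claimed formula for $N_u$ should emerge as the appropriate congruence of $\Phi$.

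First I would apply Proposition~\ref{PiW} with $W = \begin{bmatrix} E \\ F \end{bmatrix}^T$ to get $\mathcal{Z}_T(\Phi)\begin{bmatrix} E \\ F \end{bmatrix}^T = \mathcal{Z}_T(\Phi_W)$, where $\Phi_W$ is the congruence of $\Phi$ by $\begin{bmatrix} E \\ F \end{bmatrix}^T$ in the upper-left block and the identity in the lower-right block. Next, solving (\ref{sigmaunknown}) for the noise term and transposing shows that $(A,B,C,D) \in \Sigma_u$ if and only if there is some $W_-^T \in \mathcal{Z}_T(\Phi)$ with $W_-^T \begin{bmatrix} E \\ F \end{bmatrix}^T$ equal to the transposed residual, i.e. if and only if $\left( \begin{bmatrix} X_+ \\ Y_- \end{bmatrix} - \begin{bmatrix} A & B \\ C & D \end{bmatrix}\begin{bmatrix} X_- \\ U_- \end{bmatrix}\right)^T \in \mathcal{Z}_T(\Phi_W)$. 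The final step is to unfold this membership as a single QMI in $M^T \coloneqq \begin{bmatrix} A & B \\ C & D \end{bmatrix}^T$. Writing the residual as $\begin{bmatrix} X_+ \\ Y_- \end{bmatrix}^T - \begin{bmatrix} X_- \\ U_- \end{bmatrix}^T M^T$, I would factor the block column $\begin{bmatrix} I \\ \text{residual}^T \end{bmatrix}$ as the outer matrix $\begin{pmat}[{|}] I & {\begin{matrix} X_{+} \\ Y_{-} \end{matrix}} \cr \- 0 & {\begin{matrix} -X_{-} \\ -U_{-} \end{matrix}} \cr \end{pmat}^T$ applied to $\begin{bmatrix} I \\ M^T \end{bmatrix}$; substituting into the defining inequality of $\mathcal{Z}_T(\Phi_W)$ and pulling the outer factors out on both sides reproduces exactly the stated $N_u$, so membership is equivalent to $M^T \in \mathcal{Z}_{n+m}(N_u)$.

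The only point requiring care is justifying the use of Proposition~\ref{PiW}: the stacked matrix $\begin{bmatrix} E \\ F \end{bmatrix}^T$ need not have full column rank, so I would instead invoke the second alternative in that proposition and note that the standing noise assumption $\Phi_{22} < 0$ makes $\Phi_{22}$ nonsingular. Beyond this, the argument is purely mechanical block-matrix bookkeeping with no genuine obstacle, tracking that the stacked residual lives in $\mathbb{R}^{(n+p)\times T}$ and that the congruence factor has the compatible block dimensions; everything else is a direct transcription of the known-output proof.
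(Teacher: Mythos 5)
Your proposal is correct and follows essentially the same route as the paper: the paper likewise defines $G \coloneqq \begin{bmatrix} E^T & F^T \end{bmatrix}$, applies Proposition~\ref{PiW} to get $\mathcal{Z}_T(\Phi)G = \mathcal{Z}_T(\Phi_G)$, and then unfolds the residual membership into the QMI defining $N_u$. Your explicit remark that the applicability of Proposition~\ref{PiW} rests on $\Phi_{22}<0$ (hence nonsingular), rather than on full column rank of the stacked matrix, is a point the paper leaves implicit, and it is the right justification.
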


\vspace{5pt}

\begin{proof}
    We follow the same steps as in the proof of Lemma \ref{knownequivalenceconsistentset}. Define $G \coloneqq \begin{bmatrix} E^T & F^T \end{bmatrix}$. Here, we apply Proposition \ref{PiW} with $G$ instead of $E^T$, so $\mathcal{Z}_T(\Phi)G = \mathcal{Z}_T(\Phi_{G})$. Hence, $(A, B, C, D) \in \Sigma_u$ if and only if
    \begin{equation*}
        \begin{bmatrix} X_{+} \\ Y_{-} \end{bmatrix}^T - \begin{bmatrix} X_{-} \\ U_{-} \end{bmatrix}^T \begin{bmatrix} A & B \\ C & D \end{bmatrix}^T \in \mathcal{Z}_T(\Phi_{G}).
    \end{equation*}
    The latter is equivalent to
    \begin{equation} \label{noisemodelunknownver}
    \begin{pmat}[{}]
      I \cr \-
      {\begin{matrix} A^T & \! \! \! \! C^T \\ B^T & \! \! \! \! D^T \end{matrix}} \cr
    \end{pmat}^T
    N_u
    \begin{pmat}[{}]
      I \cr \-
      {\begin{matrix} A^T & \! \! \! \! C^T \\ B^T & \! \! \! \! D^T \end{matrix}} \cr
    \end{pmat} \geq 0
    \end{equation}
    or, in other words,
    \begin{equation*}
        \begin{bmatrix} A & B \\ C & D \end{bmatrix}^T \in \mathcal{Z}_{n+m}(N_u).
    \end{equation*}
    This proves the lemma.
\end{proof}

\begin{figure*}[b!]
\hrulefill
\begin{equation} \label{MN_k}
\hat{M}_k \coloneqq
\begin{pmat}[{||}]
    {\hat{R} + \hat{H}} & {\begin{matrix} 0 & 0 \\ -C_sQ & \! \! -C_sL^T \end{matrix}} & {\begin{matrix} 0 \\ D_sL \end{matrix}} \cr \-
    {\begin{matrix} 0 & \! \! -QC_s^T \\ 0 & \! \! -LC_s^T \end{matrix}} & 0 & {\begin{matrix} Q \\ L \end{matrix}} \cr \-
    {\begin{matrix} 0 & \! L^TD_s^T \end{matrix}} & {\begin{matrix} \; Q & \quad \quad L^T \end{matrix}} & Q \cr
\end{pmat} \! , \
\hat{N}_k \coloneqq
\begin{pmat}[{|}]
    I & X_{+} \cr
    0 & 0_{p \times T} \cr \-
    0 & {\begin{matrix} -X_{-} \\ -U_{-} \end{matrix}} \cr \-
    0 & 0_{n \times T} \cr
\end{pmat}
\Phi_{E^T}
\begin{pmat}[{|}]
    I & X_{+} \cr
    0 & 0_{p \times T} \cr \-
    0 & {\begin{matrix} -X_{-} \\ -U_{-} \end{matrix}} \cr \-
    0 & 0_{n \times T} \cr
\end{pmat}^T
\end{equation}
\end{figure*}

The quadratic matrix inequalities (\ref{noisemodelknownver}) and (\ref{noisemodelunknownver}) are in terms of the transposes of the matrices $A$, $B$, $C$ and $D$. This motivates the use of the following dualisation result (the proof can be found in \protect\cite[Proposition 4]{vanWaarde-2022-isodatadrivendissipativity}).
\begin{proposition} \label{dualisationineq}
    Consider a matrix $P \in \mathbb{S}^{n}$ with $P > 0$ and a matrix
    \begin{equation*}
        \begin{bmatrix} A & B \\ C & D \end{bmatrix} \in \mathbb{R}^{(n+p) \times (n+m)}.
    \end{equation*}
    Suppose that $S \in \mathbb{S}^{m+p}$ has inertia $\inertia (S) = (p, 0, m)$. Define $Q \coloneqq P^{-1}$ and
    \begin{equation*}
        \hat{S} \coloneqq \begin{bmatrix} 0 & -I_p \\ I_m & 0 \end{bmatrix} S^{-1} \begin{bmatrix} 0 & -I_m \\ I_p & 0 \end{bmatrix}.
    \end{equation*}
    Then,
    \begin{equation*}
        \! \begin{bmatrix} I & 0 \\ A & B \end{bmatrix}^T \begin{bmatrix} P & 0 \\ 0 & -P \end{bmatrix} \begin{bmatrix} I & 0 \\ A & B \end{bmatrix} + \begin{bmatrix} 0 & I \\ C & D \end{bmatrix}^T S \begin{bmatrix} 0 & I \\ C & D \end{bmatrix} \geq 0
    \end{equation*}
    if and only if
    \begin{equation*}
        \! \begin{bmatrix} I & \! \! \! \! 0 \\ A^T & \! \! \! \! C^T \end{bmatrix}^T \! \! \begin{bmatrix} Q & \! \! \! \! 0 \\ 0 & \! \! \! \! -Q \end{bmatrix} \! \! \begin{bmatrix} I & \! \! \! \! 0 \\ A^T & \! \! \! \! C^T \end{bmatrix} + \begin{bmatrix} 0 & \! \! \! \! \! I \\ B^T & \! \! \! \! \! D^T \end{bmatrix}^T \! \! \hat{S} \! \begin{bmatrix} 0 & \! \! \! \! \! I \\ B^T & \! \! \! \! \! D^T \end{bmatrix} \geq 0.
    \end{equation*}
\end{proposition}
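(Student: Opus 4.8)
The plan is to read both matrix inequalities as the restriction of a single nonsingular quadratic form---and of its inverse---to a pair of orthogonally complementary subspaces, and then to close the gap with a dualization argument that matches inertia against dimension. Order the coordinates as $(x,x^{+},u,y)$ and introduce the metric $\mathcal{P}:=\diag(P,-P,S)\in\mathbb{S}^{2n+m+p}$ together with
\[
T:=\begin{bmatrix} I & 0 \\ A & B \\ 0 & I \\ C & D \end{bmatrix},
\]
so that the first inequality is exactly $T^{T}\mathcal{P}T\geq 0$, i.e.\ $\mathcal{P}$ is positive semidefinite on $\mathcal{V}:=\im T$. Since $P>0$ and, by the inertia hypothesis, $S$ is nonsingular, $\mathcal{P}$ is nonsingular, and counting eigenvalues gives $\inertia(\mathcal{P})=(n+p,0,n+m)$. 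The two identity blocks make $T$ of full column rank, so $\dim\mathcal{V}=n+m$, which is precisely the positive index of $\mathcal{P}$.

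Next I would invoke the dualization lemma: if $\mathcal{P}$ is symmetric nonsingular and $\mathcal{V}$ is a subspace whose dimension equals the positive index of $\mathcal{P}$, then $z^{T}\mathcal{P}z\geq 0$ for all $z\in\mathcal{V}$ if and only if $w^{T}\mathcal{P}^{-1}w\leq 0$ for all $w\in\mathcal{V}^{\perp}$. I would include the short self-contained proof: a congruence reduces $\mathcal{P}$ to $\diag(I,-I)$ (which is its own inverse); a nonnegative subspace of maximal dimension then cannot meet the negative coordinate block, so it is the graph $\{(a,Ka)\}$ of some $K$, the condition becomes $I-K^{T}K\geq 0$, its orthogonal complement is $\{(-K^{T}b,b)\}$, and the dual condition becomes $I-KK^{T}\geq 0$, which is equivalent. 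Applying this with $\mathcal{P}^{-1}=\diag(Q,-Q,S^{-1})$ shows that the first inequality is equivalent to $w^{T}\mathcal{P}^{-1}w\leq 0$ on $\mathcal{V}^{\perp}$.

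Finally I would make $\mathcal{V}^{\perp}=\ker T^{T}$ explicit. Parametrizing by $(\xi,\eta)\in\R^{n+p}$, a generator is
\[
R:=\begin{bmatrix} -A^{T} & -C^{T} \\ I & 0 \\ -B^{T} & -D^{T} \\ 0 & I \end{bmatrix},
\]
and I would expand $-R^{T}\mathcal{P}^{-1}R$ block by block. The $(x,x^{+})$ rows of $R$ reproduce verbatim the first term $\begin{bmatrix} I & 0 \\ A^{T} & C^{T}\end{bmatrix}^{T}\diag(Q,-Q)\begin{bmatrix} I & 0 \\ A^{T} & C^{T}\end{bmatrix}$ of the claimed dual inequality, so no work is needed there. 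The genuine content is the $(u,y)$ block: writing $\Gamma:=\begin{bmatrix} -B^{T} & -D^{T} \\ 0 & I \end{bmatrix}$, this block contributes $-\Gamma^{T}S^{-1}\Gamma$, and it must be identified with $\begin{bmatrix} 0 & I \\ B^{T} & D^{T}\end{bmatrix}^{T}\hat{S}\begin{bmatrix} 0 & I \\ B^{T} & D^{T}\end{bmatrix}$. This sign-and-permutation identity is the step I expect to be the main obstacle. The block permutation $J:=\begin{bmatrix} 0 & I_{p} \\ -I_{m} & 0 \end{bmatrix}$ satisfies $J\Gamma=\begin{bmatrix} 0 & I \\ B^{T} & D^{T}\end{bmatrix}$, so the two quadratic forms agree exactly when $\hat{S}=-(J^{-1})^{T}S^{-1}J^{-1}$; substituting $J^{-1}=\begin{bmatrix} 0 & -I_{m} \\ I_{p} & 0 \end{bmatrix}$ and pulling the minus sign into the left factor recovers precisely the stated $\hat{S}=\begin{bmatrix} 0 & -I_{p} \\ I_{m} & 0 \end{bmatrix}S^{-1}\begin{bmatrix} 0 & -I_{m} \\ I_{p} & 0 \end{bmatrix}$. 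I would be careful with the block sizes ($J$ sends an $(m{+}p)$-vector to a $(p{+}m)$-vector) and with the orientation of the signs, and I would note that the inertia assumption $(p,0,m)$ is exactly what guarantees that $S$, hence $S^{-1}$ and $\hat{S}$, are well defined.
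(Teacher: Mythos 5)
Your proof is correct. Note first that the paper itself does not prove this proposition; it defers to Proposition~4 of the cited reference \cite{vanWaarde-2022-isodatadrivendissipativity}, so there is no in-paper argument to match against. Your route is the classical one: view both inequalities as sign conditions for the nonsingular form $\mathcal{P}=\diag(P,-P,S)$ on $\im T$ and for $\mathcal{P}^{-1}$ on $\ker T^{T}$, and close the loop with the dualization lemma. The bookkeeping all checks out: $\inertia(\mathcal{P})=(n+p,0,n+m)$ in the paper's $(\rho_-,\rho_0,\rho_+)$ convention, so $\dim\im T=n+m$ is indeed the positive index; the $(x,x^{+})$ block of $-R^{T}\mathcal{P}^{-1}R$ equals $\bigl[\begin{smallmatrix} I & 0 \\ A^{T} & C^{T}\end{smallmatrix}\bigr]^{T}\diag(Q,-Q)\bigl[\begin{smallmatrix} I & 0 \\ A^{T} & C^{T}\end{smallmatrix}\bigr]$ as you claim; and your permutation $J$ satisfies $J\Gamma=\bigl[\begin{smallmatrix}0 & I\\ B^{T} & D^{T}\end{smallmatrix}\bigr]$ with $-(J^{-1})^{T}S^{-1}J^{-1}$ reducing exactly to the stated $\hat{S}$. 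Two small points worth tightening if you write this out in full: (i) in the reduction of the dualization lemma to the model case $\diag(I,-I)$, state explicitly that the congruence $\mathcal{P}=G^{T}\diag(I,-I)G$ sends $\mathcal{V}^{\perp}$ to $(G\mathcal{V})^{\perp}=G^{-T}\mathcal{V}^{\perp}$, which is what makes the orthogonal complements transform compatibly; and (ii) the converse direction of the lemma needs one extra line (e.g.\ apply the forward direction to $-\mathcal{P}^{-1}$ on $\mathcal{V}^{\perp}$, whose dimension equals the positive index of $-\mathcal{P}^{-1}$), rather than relying only on the equivalence $I-K^{T}K\geq 0\Leftrightarrow I-KK^{T}\geq 0$ stated for the forward parametrization. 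Neither point is a gap in substance.
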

\phantom{} \\
\noindent Note that the above dualisation result works under the assumption that $P > 0$. For this reason, we make use an auxiliary statement. Namely, an extension to Sylvester's law of inertia, which can be found in \protect\cite[Theorem 3.1]{Dancis-1986-sylvesterlawofinertia3}.
\begin{proposition} \label{Sylvester}
    Let $H \in \mathbb{S}^n$ and $M^THM$, where $M \in \mathbb{R}^{n \times m}$, have $\nu$ and $\hat{\nu}$ negative eigenvalues, respectively. Then, $\nu + (m - n) - \dim(\ker M) \leq \hat{\nu}$.
\end{proposition}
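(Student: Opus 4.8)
The plan is to prove the inequality through the variational (Courant--Fischer type) characterisation of the number of negative eigenvalues of a symmetric matrix. Recall that for any $X \in \mathbb{S}^k$, the number of negative eigenvalues equals the largest dimension of a subspace $W \subseteq \mathbb{R}^k$ on which the associated quadratic form is negative definite, i.e. $x^T X x < 0$ for all nonzero $x \in W$; one direction follows from the spectral theorem by taking $W$ to be the span of the eigenvectors associated with negative eigenvalues, and the reverse bound from a dimension-counting argument against the nonnegative eigenspace. Applying this to $X = M^T H M \in \mathbb{S}^m$, it suffices to exhibit a subspace $W \subseteq \mathbb{R}^m$ of dimension at least $\nu + (m-n) - \dim(\ker M)$ on which $z \mapsto z^T M^T H M z$ is negative definite, since then $\hat{\nu} \geq \dim W$.

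First I would fix a subspace $V \subseteq \mathbb{R}^n$ with $\dim V = \nu$ on which $H$ is negative definite, which exists by the same characterisation applied to $H$. The natural candidate for the witness subspace is the preimage $M^{-1}(V) = \{z \in \mathbb{R}^m \mid Mz \in V\}$: for any $z$ with $Mz \in V$ we have $z^T M^T H M z = (Mz)^T H (Mz)$, which is strictly negative whenever $Mz \neq 0$. The obstruction is that $M^{-1}(V)$ contains $\ker M$, on which the form vanishes, so the preimage itself is not negative definite. To fix this, I would pass to a subspace $W \subseteq M^{-1}(V)$ complementary to $\ker M$, i.e. $W \cap \ker M = \{0\}$ and $W + \ker M = M^{-1}(V)$. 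On such a $W$, every nonzero $z$ satisfies $Mz \in V \setminus \{0\}$, hence $z^T M^T H M z = (Mz)^T H (Mz) < 0$, as desired.

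It then remains to count dimensions. Restricting $M$ to $M^{-1}(V)$ yields a surjection onto $V \cap \im M$ with kernel $\ker M$, so by rank--nullity $\dim M^{-1}(V) = \dim(\ker M) + \dim(V \cap \im M)$, and hence $\dim W = \dim M^{-1}(V) - \dim(\ker M) = \dim(V \cap \im M)$. Finally, since $V$ and $\im M$ are both subspaces of $\mathbb{R}^n$, the standard bound $\dim(V \cap \im M) \geq \dim V + \dim(\im M) - n$ together with $\dim V = \nu$ and $\dim(\im M) = \rank M = m - \dim(\ker M)$ gives $\dim W \geq \nu + (m-n) - \dim(\ker M)$. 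Combined with $\hat{\nu} \geq \dim W$, this completes the argument.

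I expect the only genuinely delicate point to be the treatment of $\ker M$: the preimage construction is immediate, but one must split off the kernel to secure a strictly (rather than merely non-strictly) negative definite form, and it is precisely this excision of $\ker M$ that produces the $-\dim(\ker M)$ term in the bound. The remaining steps are routine dimension arithmetic.
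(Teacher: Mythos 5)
Your proof is correct and complete. Note that the paper itself does not prove Proposition~\ref{Sylvester}; it simply imports the statement from Dancis (Theorem~3.1 of the cited reference), so there is no in-paper argument to compare against. Your variational route --- characterising $\hat{\nu}$ as the maximal dimension of a subspace on which the quadratic form of $M^THM$ is negative definite, pulling back the negative eigenspace $V$ of $H$, excising $\ker M$ to restore strict negativity, and then bounding $\dim(V \cap \im M) \geq \dim V + \rank M - n$ --- is the standard proof of this inertia bound, and you correctly identify the one delicate step: the complement of $\ker M$ inside $M^{-1}(V)$ is exactly what produces the $-\dim(\ker M)$ correction. All the dimension counts (rank--nullity for the restriction of $M$ to $M^{-1}(V)$, and the subspace intersection inequality) check out.
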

The following lemma allows us to later apply Proposition \ref{dualisationineq}.
\begin{lemma} \label{Lemma: P > 0}
    Let matrices $K$ and $P \geq 0$ be such that (\ref{preunknownverdissineq}) holds for all $(A, B, C, D) \in \Sigma_u$. Then, $P > 0$.
\end{lemma}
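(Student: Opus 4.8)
The plan is to argue by contradiction. Suppose $P\ge 0$ is singular, so that $\rank P = n-k$ with $k\ge 1$, and fix a nonzero $v\in\ker P$. The starting observation is that evaluating the quadratic form in (\ref{preunknownverdissineq}) at the vector $\begin{bmatrix} v^T & 0 \end{bmatrix}^T$ already yields $(\mathcal{C}v)^T S_{22}(\mathcal{C}v)\ge (\mathcal{A}v)^T P(\mathcal{A}v)\ge 0$, where $S_{22}$ is the lower-right ($y$-)block of $S$; this hints that a singular $P$ constrains the closed-loop output matrix $\mathcal{C}=C+DK$. The idea is to upgrade this into a genuine rank bound on $\mathcal{C}$ that must hold for \emph{every} consistent system, and then contradict it using the freedom provided by the nonempty interior of $\Sigma_u$.

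To make this precise I would split (\ref{preunknownverdissineq}) into $\mathcal{M}+\mathcal{N}\ge 0$, where $\mathcal{M}:=\begin{bmatrix} I&0\\\mathcal{A}&E\end{bmatrix}^T\begin{bmatrix}P&0\\0&-P\end{bmatrix}\begin{bmatrix}I&0\\\mathcal{A}&E\end{bmatrix}$ and $\mathcal{N}:=\begin{bmatrix}0&I\\\mathcal{C}&F\end{bmatrix}^T S\begin{bmatrix}0&I\\\mathcal{C}&F\end{bmatrix}$. Write $\nu(X)$ and $\pi(X)$ for the numbers of negative and positive eigenvalues of a symmetric $X$. Since the negative index is subadditive, $\mathcal{M}+\mathcal{N}\ge 0$ forces $\nu(\mathcal{N})\le \pi(\mathcal{M})$. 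On one hand, positive inertia cannot increase under a congruence $X\mapsto R^TXR$, so $\pi(\mathcal{M})\le \pi(\diag(P,-P))=\rank P=n-k$. On the other hand, Proposition \ref{Sylvester} applied to $\mathcal{N}$ with $H=S$ (which has $\nu(S)=p$ by Assumption \ref{Assumption: Inertia of S}) and $M=\begin{bmatrix}0&I\\\mathcal{C}&F\end{bmatrix}$ (whose kernel has dimension $\dim\ker\mathcal{C}=n-\rank\mathcal{C}$) gives $\nu(\mathcal{N})\ge p+\bigl((n+d)-(d+p)\bigr)-(n-\rank\mathcal{C})=\rank\mathcal{C}$. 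Chaining the bounds yields $\rank\mathcal{C}\le n-k$ for every $(A,B,C,D)\in\Sigma_u$.

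Finally I would invoke the nonempty interior of $\Sigma_u$: since the projection of an open set onto the $C$-coordinate is open, $\mathcal{C}=C+DK$ ranges over an open subset of $\mathbb{R}^{p\times n}$ as $(A,B,C,D)$ runs over $\Sigma_u$, and such a set contains matrices of the maximal attainable rank. Selecting a consistent system whose $\mathcal{C}$ has rank larger than $n-k$ then contradicts $\rank\mathcal{C}\le n-k$, so $k=0$ and hence $P>0$, which is exactly what is needed to apply the dualisation result of Proposition \ref{dualisationineq}.

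I expect the delicate points to be the inertia bookkeeping of the middle paragraph—justifying $\nu(\mathcal{N})\le\pi(\mathcal{M})$ from $\mathcal{M}+\mathcal{N}\ge 0$, the congruence bound $\pi(\mathcal{M})\le\rank P$, and the correct instantiation of Proposition \ref{Sylvester} for $\mathcal{N}$—together with the final step that turns "nonempty interior" into the existence of a consistent closed-loop $\mathcal{C}$ of rank exceeding $n-k$. The cleanest situation is when the maximal reachable rank of $\mathcal{C}$ equals $n$; guaranteeing that sufficiently high rank is attainable to exclude \emph{every} nonzero $k$ is the crux of the argument, and is precisely where the inertia assumption on $S$ and the interior hypothesis on $\Sigma_u$ must be used in tandem.
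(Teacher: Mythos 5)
Your inertia bookkeeping in the middle paragraph is correct step by step: $\mathcal{M}+\mathcal{N}\ge 0$ does give $\nu(\mathcal{N})\le\pi(\mathcal{M})$, congruence does give $\pi(\mathcal{M})\le\pi(\diag(P,-P))=\rank P$, and Proposition \ref{Sylvester} applied to $\mathcal{N}$ does give $\nu(\mathcal{N})\ge\rank\mathcal{C}$. But the resulting bound $\rank\mathcal{C}\le\rank P=n-k$ is too weak to close the argument, and the final paragraph does not repair this. The set of consistent $\mathcal{C}$'s is an open subset of $\mathbb{R}^{p\times n}$, so the maximal attainable rank is $\min(p,n)$, and you only reach a contradiction when $\min(p,n)>n-k$, i.e.\ when $k>n-p$. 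Whenever $p<n$ (fewer outputs than states, e.g.\ a single-output system with several states) this excludes only the largest values of $k$ and leaves singular $P$ of corank $1,\dots,n-p$ entirely unconstrained. You correctly identify this as ``the crux,'' but the inertia assumption on $S$ and the interior hypothesis, used globally as you use them, cannot supply the missing ranks: the obstruction is that a global inertia count of $\mathcal{N}$ over all of $\mathbb{R}^{n+d}$ discards the directional information about \emph{which} vectors lie in $\ker P$. (Your opening observation has the same weakness for a different reason: for the passivity supply rate $S_{22}=0$, so evaluating at $(v,0)$ alone yields nothing about $\mathcal{C}v$.)

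The fix, which is what the paper does, is to restrict the quadratic form in (\ref{preunknownverdissineq}) to the $(1+d)$-dimensional subspace $\{(\alpha\xi,\eta):\alpha\in\mathbb{R},\,\eta\in\mathbb{R}^d\}$ for a fixed $\xi\in\ker P$ \emph{before} counting inertia. Since $P\ge 0$, the $P$-part of the restricted form is negative semidefinite, so the $S$-part, namely the congruence of $S$ by the $(d+p)\times(1+d)$ matrix $\bigl[\begin{smallmatrix}0&I\\ \mathcal{C}\xi&F\end{smallmatrix}\bigr]$, must be positive semidefinite; Proposition \ref{Sylvester} with $\nu(S)=p$ then forces this matrix to have a nontrivial kernel, which happens precisely when $\mathcal{C}\xi=0$. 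This is a pointwise statement, $\ker P\subseteq\ker(C+DK)$ for \emph{every} $(A,B,C,D)\in\Sigma_u$, valid for any $p\ge 1$, and it is this containment (not a rank bound) that the interior assumption contradicts: perturbing $C$ alone by $\epsilon ve_i^T$ keeps the system in $\Sigma_u$ and forces $e_i^T\xi=0$ for all $i$, hence $\xi=0$. As written, your proof is therefore incomplete except in the special case $p\ge n$.
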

\begin{proof}
Take $\xi \in \ker P$. For arbitrary $\alpha \in \mathbb{R}$ and $\eta \in \mathbb{R}^d$, pre- and postmultiplying (\ref{preunknownverdissineq}) by $\begin{bmatrix} \alpha \xi^T & \eta^T \end{bmatrix}$ and its transpose yields
\begin{equation*}
    \begin{bmatrix} \alpha \\ \eta \end{bmatrix}^T \! \! \left( \! - \! \begin{bmatrix} A\xi & \! \! \! E \end{bmatrix}^T \! P \! \begin{bmatrix} A\xi & \! \! \! E \end{bmatrix} \! + \! \begin{bmatrix} 0 & \! \! \! \! I \\ \mathcal{C}\xi & \! \! \! \! F \end{bmatrix}^T \! \! S \! \begin{bmatrix} 0 & \! \! \! \! I \\ \mathcal{C}\xi & \! \! \! \! F \end{bmatrix} \! \right) \! \! \begin{bmatrix} \alpha \\ \eta \end{bmatrix} \geq 0
\end{equation*}
with $\mathcal{A}$ and $\mathcal{C}$ defined as in Definition \ref{informativitydefinitionunknown}. The positive semidefiniteness of $P$ implies that
\begin{equation*}
    \begin{bmatrix} 0 & \! \! \! I \\ \mathcal{C}\xi & \! \! \! F \end{bmatrix}^T \! \! S \! \begin{bmatrix} 0 & \! \! \! I \\ \mathcal{C}\xi & \! \! \! F \end{bmatrix} \geq 0
\end{equation*}
for all $(A, B, C, D) \in \Sigma_u$. From Proposition \ref{Sylvester} and Assumption \ref{Assumption: Inertia of S}, we see that
\begin{equation*}
    \dim \left( \ker \begin{bmatrix} 0 & \! \! \! I \\ \mathcal{C}\xi & \! \! \! F \end{bmatrix} \right) \geq 1
\end{equation*}
and, consequently, $\mathcal{C}\xi = 0$ for all $(A, B, C, D) \in \Sigma_u$. Let $(A, B, C, D) \in \inte(\Sigma_u)$ and define $C_\epsilon \coloneqq \epsilon ve_i^T$, where $\epsilon > 0$, $v \in \mathbb{R}^p$ is nonzero and $e_i \in \mathbb{R}^n$ is the \emph{i}th standard basis vector of $\mathbb{R}^n$. For sufficiently small $\epsilon$, $(A, B, C+C_\epsilon, D) \in \Sigma_u$. Hence, $(C + C_\epsilon + DK)\xi = C_\epsilon \xi = \epsilon ve_i^T\xi = 0$ for all $i = 1, \ldots, n$. Because $\epsilon$ and $v$ are nonzero, we must have that $e_i^T\xi = 0$ for all $i$. In other words, $\xi = 0$, which shows that $P$ is positive definite.
\end{proof}

\section{Main Result} \label{section5}
We are ready to state and prove the main theorem, which we divide into two parts. In part \ref{Thm1a} we treat the case where $C_s$ and $D_s$ are unknown. In part \ref{Thm1b} we assume that $C_s$ and $D_s$ are known, which requires an additional assumption on the data.
\begin{theorem} \label{maintheorem1}
For $Q \in \mathbb{S}^n$ and $L \in \mathbb{R}^{m \times n}$, define matrices $\hat{M}_k$ and $\hat{N}_k$ as in $(\ref{MN_k})$ as well as
\begin{align*}
\hat{M}_u \! \! &\coloneqq \! \! \!
\begin{pmat}[{||}]
    \hat{R} & 0 & 0 \cr \-
    0 & 0 & {\begin{matrix} Q \\ L \end{matrix}} \cr \-
    0 & {\begin{matrix} Q & \! \! \! L^T \end{matrix}} & Q \cr
\end{pmat} \! \!, \,
\hat{N}_u \! \! \coloneqq \! \! \!
\begin{pmat}[{|}]
    I & {\begin{matrix} X_{+} \\ Y_{-} \end{matrix}} \cr \-
    0 & {\begin{matrix} -X_{-} \\ -U_{-} \end{matrix}} \cr \-
    0 & 0_{n \times T} \cr
\end{pmat}
\! \Phi_G \!
\begin{pmat}[{|}]
    I & {\begin{matrix} X_{+} \\ Y_{-} \end{matrix}} \cr \-
    0 & {\begin{matrix} -X_{-} \\ -U_{-} \end{matrix}} \cr \-
    0 & 0_{n \times T} \cr
\end{pmat}^T \! \! \!,
\end{align*}
where
\begin{align*}
    G &\coloneqq \begin{bmatrix} E^T & \! \! F^T \end{bmatrix} \!, \;
    \hat{S} \coloneqq \begin{bmatrix} 0 & -I_p \\ I_d & 0 \end{bmatrix} S^{-1} \begin{bmatrix} 0 & -I_d \\ I_p & 0 \end{bmatrix}, \\
    \hat{R} &\coloneqq \begin{bmatrix} Q & 0 \\ 0 & 0 \end{bmatrix} + \begin{bmatrix} 0 & I \\ E^T & F^T \end{bmatrix}^T \hat{S} \begin{bmatrix} 0 & I \\ E^T & F^T \end{bmatrix}, \\
    \hat{H} &\coloneqq \begin{bmatrix} 0 & 0 \\ 0 & -C_sQC_s^T-C_sL^TD_s^T-D_sLC_s^T \end{bmatrix}.
\end{align*}
\begin{enumerate}[label=(\alph*)]
    \item \label{Thm1a} Assume that $\hat{N}_u$ has at least one positive eigenvalue. The data $(U_{-}, X, Y_{-})$, generated by (\ref{noisediss}) with noise model (\ref{noisemodeldiss}), are informative for closed-loop dissipativity with respect to the supply rate (\ref{supplyratenoise}) if and only if there exist a positive definite matrix $Q$, a matrix $L$ and a scalar $\alpha \geq 0$ such that $\hat{M}_u-\alpha \hat{N}_u \geq 0$.
    \item \label{Thm1b} Assume that $\rank \begin{bmatrix} X_{-}^T & U_{-}^T \end{bmatrix}^T = n+m$ and $\hat{N}_k$ has at least one positive eigenvalue. The data $(U_{-}, X)$, generated by (\ref{noisediss1}) with noise model (\ref{noisemodeldiss}), are informative for closed-loop dissipativity with respect to the supply rate (\ref{supplyratenoise}) if and only if exactly one of the following is true:
    \begin{enumerate}[label=(\roman*)]
        \item \label{thm1bP>0} there exist a positive definite matrix $Q$, a matrix $L$ and a scalar $\alpha \geq 0$ such that $\hat{M}_k-\alpha \hat{N}_k \geq 0$;
        \item \label{thm1bP=0} $\im C_s \subseteq \im D_s$ and
        \begin{equation*}
            \begin{bmatrix} I \\ F \end{bmatrix}^T S \begin{bmatrix} I \\ F \end{bmatrix} \geq 0.
        \end{equation*}
    \end{enumerate}
\end{enumerate}
If the conditions in \ref{Thm1a} or \ref{Thm1b}\ref{thm1bP>0} are satisfied, then a controller that achieves closed-loop dissipativity for all systems in $\Sigma_u$ and $\Sigma_k$, respectively, is $K = L Q^{-1}$. If, instead, the conditions in \ref{Thm1b}\ref{thm1bP=0} hold, then $K$ can be chosen such that $C_s + D_sK = 0$.
\end{theorem}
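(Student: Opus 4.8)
The plan is to treat both parts through a single pipeline: rewrite closed-loop dissipativity as a quadratic matrix inequality (QMI) that must hold for every consistent system, dualise so that the matrix S-lemma (Proposition~\ref{dissmainproof1}) applies, and then linearise the bilinear dependence on $(K,P)$ via the substitution $Q=P^{-1}$, $L=KQ$ together with a Schur complement. For part~\ref{Thm1a} I would start from Definition~\ref{informativitydefinitionunknown}. Lemma~\ref{Lemma: P > 0} guarantees that any feasible $P$ is positive definite, so I may set $Q=P^{-1}$ and apply Proposition~\ref{dualisationineq} to the closed-loop quadruple $(\mathcal{A},E,\mathcal{C},F)$, with the noise dimension $d$ playing the role of $m$ (legitimate since Assumption~\ref{Assumption: Inertia of S} gives $\inertia(S)=(p,0,d)$). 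Writing $Z\coloneqq\begin{bmatrix}A & B\\ C & D\end{bmatrix}^T$ and using the identity $\begin{bmatrix}\mathcal{A}^T & \mathcal{C}^T\end{bmatrix}=\begin{bmatrix}I & K^T\end{bmatrix}Z$, a short computation turns the dualised inequality into $\hat{R}-Z^T\begin{bmatrix}I\\K\end{bmatrix}Q\begin{bmatrix}I & K^T\end{bmatrix}Z\ge 0$, i.e. $Z\in\mathcal{Z}_{n+m}(M)$ where $M$ is block diagonal with blocks $\hat{R}$ and $-\begin{bmatrix}Q & L^T\\ L & KQK^T\end{bmatrix}$, and $L=KQ$.

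By Lemma~\ref{unknownequivalenceconsistentset}, consistency of $(A,B,C,D)$ is precisely $Z\in\mathcal{Z}_{n+m}(N_u)$, so informativity is equivalent to the inclusion $\mathcal{Z}_{n+m}(N_u)\subseteq\mathcal{Z}_{n+m}(M)$. After checking $N_u\in\boldsymbol{\Pi}_{n+p,n+m}$ (inherited from $\Phi\in\boldsymbol{\Pi}_{d,T}$ with $\Phi_{22}<0$) and noting that $\hat{N}_u=\diag(N_u,0)$ has a positive eigenvalue iff $N_u$ does, Proposition~\ref{dissmainproof1} yields a scalar $\alpha\ge 0$ with $M-\alpha N_u\ge 0$. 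Finally, since $Q>0$, the Schur complement of $\hat{M}_u$ with respect to its trailing $Q$-block returns exactly $M$, and the zero block row $0_{n\times T}$ padding $\hat{N}_u$ makes the Schur complement of $\hat{M}_u-\alpha\hat{N}_u$ equal $M-\alpha N_u$; hence $\hat{M}_u-\alpha\hat{N}_u\ge 0$ iff $M-\alpha N_u\ge 0$. Every step is reversible, so this closes part~\ref{Thm1a} with storage $P=Q^{-1}$ and controller $K=LQ^{-1}$.

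Part~\ref{Thm1b} follows the same route whenever a positive definite $P$ is available, now using $N_k$ (Lemma~\ref{knownequivalenceconsistentset}) and carrying the known, data-independent matrices $C_s,D_s$ through the dualisation; this is where the block $\hat{H}$ and the off-diagonal terms $-C_sQ$, $-C_sL^T$, $D_sL$ of $\hat{M}_k$ arise. The rank hypothesis $\rank\begin{bmatrix}X_-^T & U_-^T\end{bmatrix}^T=n+m$, together with $\Phi_{22}<0$, is what secures $N_k\in\boldsymbol{\Pi}_{n,n+m}$ (in particular $(N_k)_{22}<0$) so that Proposition~\ref{dissmainproof1} applies. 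The genuinely new difficulty is that here there is no analogue of Lemma~\ref{Lemma: P > 0}, so $P$ need not be definite. I would resolve this with a dichotomy argument: for $\xi\in\ker P$ the computation of Lemma~\ref{Lemma: P > 0} (Proposition~\ref{Sylvester} plus the inertia of $S$) still gives $\mathcal{C}_s\xi=0$, and feeding $\begin{bmatrix}\xi\\0\end{bmatrix}$ into (\ref{preknownverdissineq}) yields $-(\mathcal{A}\xi)^TP(\mathcal{A}\xi)\ge 0$, whence $\mathcal{A}\xi\in\ker P$; thus $\ker P$ is $(A+BK)$-invariant for \emph{every} $(A,B)\in\Sigma_k$. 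Because $\Sigma_k$ has nonempty interior, $A$ ranges over an open set, and a subspace left invariant by an open set of matrices must be $\{0\}$ or $\mathbb{R}^n$. Hence either $P>0$, giving condition~\ref{thm1bP>0} exactly as above, or $P=0$, in which case $\mathcal{C}_s=C_s+D_sK=0$ (so $\im C_s\subseteq\im D_s$) and (\ref{preknownverdissineq}) collapses to $\begin{bmatrix}I\\F\end{bmatrix}^TS\begin{bmatrix}I\\F\end{bmatrix}\ge 0$, giving condition~\ref{thm1bP=0}; conversely~\ref{thm1bP=0} is realised by any $K$ with $C_s+D_sK=0$ and $P=0$.

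The main obstacle, beyond bookkeeping, is twofold. First, the dualisation demands $P>0$, which is free in part~\ref{Thm1a} but must be earned in part~\ref{Thm1b} through the invariant-subspace dichotomy above. Second, and more delicate, is proving that the two certificates are \emph{mutually} exclusive (the ``exactly one'') rather than merely exhaustive. For this I would translate condition~\ref{thm1bP=0}, namely that $\im\begin{bmatrix}I\\F\end{bmatrix}$ is a maximal nonnegative subspace of $S$ under $\inertia(S)=(p,0,d)$, into an inertia obstruction to the dualised inequality $\hat{R}-\Theta^TQ\Theta\ge 0$ with $Q>0$ (where $\Theta=\begin{bmatrix}I & K^T\end{bmatrix}Z$); verifying this explicit incompatibility between the $P=0$ and $P>0$ regimes is the step I expect to require the most care.
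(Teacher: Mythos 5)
Your pipeline is the paper's pipeline, and part \ref{Thm1a} as you describe it is essentially the proof given there: Lemma \ref{Lemma: P > 0} yields $P>0$, Proposition \ref{dualisationineq} dualises, Proposition \ref{dissmainproof1} converts the inclusion $\mathcal{Z}_{n+m}(N_u)\subseteq\mathcal{Z}_{n+m}(M_u)$ into $M_u-\alpha N_u\geq 0$, and the Schur complement with respect to the trailing $Q$-block linearises; your identities for $M_u$ and for the Schur complement of $\hat M_u-\alpha\hat N_u$ are correct. Your treatment of the degenerate case in part \ref{Thm1b} (obtaining $\mathcal{C}_s\xi=0$ via Proposition \ref{Sylvester}, then forcing $P=0$ by perturbing $A$ inside $\inte(\Sigma_k)$) is also the paper's argument, merely rephrased as an invariant-subspace dichotomy, and it works.

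The gap is in the necessity direction of part \ref{Thm1b}\ref{thm1bP>0}. With $C_s,D_s$ known, the dualised dissipation inequality is an $(n+p)\times(n+p)$ inequality and can only be written as a QMI in the padded unknown $\begin{bmatrix} A^T & 0\\ B^T & 0\end{bmatrix}\in\mathbb{R}^{(n+m)\times(n+p)}$, whereas Lemma \ref{knownequivalenceconsistentset} characterises $\Sigma_k$ by a QMI in $\begin{bmatrix}A^T\\ B^T\end{bmatrix}\in\mathbb{R}^{(n+m)\times n}$. These are sets of matrices of different sizes, so Proposition \ref{dissmainproof1} cannot be applied to the inclusion as you set it up; what informativity gives you is that the $M_k$-QMI holds on the image $\mathcal{Z}_{n+m}(N_k)\begin{bmatrix}I & 0_{n\times p}\end{bmatrix}$, and for \emph{necessity} you must show that this image is itself a set of the form $\mathcal{Z}_{n+m}(\bar N_k)$ before the S-lemma can be invoked. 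That is precisely the role of Proposition \ref{PiW}; and since $W=\begin{bmatrix}I & 0_{n\times p}\end{bmatrix}$ does not have full column rank, it requires $(N_k)_{22}$ to be nonsingular, which is the true purpose of the hypothesis $\rank\begin{bmatrix}X_-^T & U_-^T\end{bmatrix}^T=n+m$ combined with $\Phi_{22}<0$. Your proposal instead attributes the rank condition to making Proposition \ref{dissmainproof1} applicable, but that proposition does not require $(N_k)_{22}<0$, so as written your argument is missing the step that makes necessity go through. (Separately, your concern about the mutual exclusivity behind the phrase ``exactly one'' is a fair observation -- the paper's own proof only establishes ``at least one'' in the forward direction and sufficiency of each condition separately -- but resolving it is not needed to reproduce the paper's argument.)
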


\begin{figure*}[b!]
\hrulefill
\begin{equation} \label{M_k}
M_k \coloneqq
\begin{pmat}[{|.}]
    {\hat{R} + H} & {\begin{matrix} 0 \\ -C_sQ-D_sKQ \end{matrix}} & {\begin{matrix} 0 \\ -C_sQK^T-D_sKQK^T \end{matrix}} \cr \-
    {\begin{matrix} 0 & -QC_s^T-QK^TD_s^T \\ 0 & -KQC_s^T-KQK^TD_s^T \end{matrix}} & {\begin{matrix} -Q \\ -KQ \end{matrix}} & {\begin{matrix} -QK^T \\ -KQK^T \end{matrix}} \cr
\end{pmat}
\end{equation}
\end{figure*}

\begin{proof}
We begin with the first statement. Suppose that the data $(U_{-}, X, Y_{-})$ are informative. Then, there exist matrices $K$ and $P \geq 0$ such that (\ref{preunknownverdissineq}) holds for all $(A, B, C, D) \in \Sigma_u$. We have from Lemma \ref{Lemma: P > 0} that $P > 0$. Define $Q \coloneqq P^{-1} > 0$. By Proposition \ref{dualisationineq},
\begin{equation} \label{unknownverdissineq}
    \begin{bmatrix} I & \! \! \! \! 0 \\ \mathcal{A}^T & \! \! \! \! \mathcal{C}^T \end{bmatrix}^T \! \! \begin{bmatrix} Q & \! \! \! \! 0 \\ 0 & \! \! \! \! -Q \end{bmatrix} \! \! \begin{bmatrix} I & \! \! \! \! 0 \\ \mathcal{A}^T & \! \! \! \! \mathcal{C}^T \end{bmatrix} + \begin{bmatrix} 0 & \! \! \! \! I \\ E^T & \! \! \! \! F^T \end{bmatrix}^T \! \! \hat{S} \! \begin{bmatrix} 0 & \! \! \! \! I \\ E^T & \! \! \! \! F^T \end{bmatrix} \geq 0
\end{equation}
holds for all $(A, B, C, D) \in \Sigma_u$. We can rewrite the dual dissipation inequality (\ref{unknownverdissineq}) as
\begin{equation*} \label{unknownverrewrittenineq}
\begin{pmat}[{}]
      I \cr \-
      {\begin{matrix} A^T & \! \! \! \! C^T \\ B^T & \! \! \! \! D^T \end{matrix}} \cr
    \end{pmat}^T
M_u
\begin{pmat}[{}]
      I \cr \-
      {\begin{matrix} A^T & \! \! \! \! C^T \\ B^T & \! \! \! \! D^T \end{matrix}} \cr
    \end{pmat} \geq 0,
\end{equation*}
where
\begin{equation*}
    M_u \coloneqq
    \begin{pmat}[{|}]
    \hat{R} & 0 \cr \-
    0 & {\begin{matrix} -Q & -QK^T \\ -KQ & -KQK^T \end{matrix}} \cr
    \end{pmat}.
\end{equation*}
Because the data $(U_{-}, X, Y_{-})$ are informative for closed-loop dissipativity, we have $\mathcal{Z}_{n+m}(N_u) \subseteq \mathcal{Z}_{n+m}(M_u)$. Since $\hat{N}_u$ has a positive eigenvalue, also $N_u$ has a positive eigenvalue and moreover it can be shown that $N_u \in \boldsymbol{\Pi}_{n+p,n+m}$. Therefore, by Proposition \ref{dissmainproof1}, $M_u-\alpha N_u \geq 0$ for some $\alpha \geq 0$. Define $L \coloneqq KQ$. By using a Schur complement argument, we conclude that $\hat{M}_u-\alpha \hat{N}_u \geq 0$.

Conversely, suppose that there exist matrices $Q>0$ and $L$ and a scalar $\alpha \geq 0$ such that $\hat{M}_u-\alpha \hat{N}_u \geq 0$. Let $K \coloneqq LQ^{-1}$. With a Schur complement argument with respect to the bottom right block, we have that $M_u-\alpha N_u \geq 0$. Let $(A, B, C, D) \in \Sigma_u$. After multiplying this inequality from the right by
\begin{equation} \label{leftrightmultiplicationmatrix}
    \begin{pmat}[{}]
      I \cr \-
      {\begin{matrix} A^T & \! \! \! \! C^T \\ B^T & \! \! \! \! D^T \end{matrix}} \cr
    \end{pmat}
\end{equation}
and from the left by its transpose, we get
\begin{equation*}
    \! \! \begin{pmat}[{}]
      I \cr \-
      {\begin{matrix} A^T & \! \! \! \! C^T \\ B^T & \! \! \! \! D^T \end{matrix}} \cr
    \end{pmat}^T
    \! \! \! \! M_u \! \!
    \begin{pmat}[{}]
      I \cr \-
      {\begin{matrix} A^T & \! \! \! \! C^T \\ B^T & \! \! \! \! D^T \end{matrix}} \cr
    \end{pmat} \! \! \! \geq \!
    \alpha \! \!
    \begin{pmat}[{}]
      I \cr \-
      {\begin{matrix} A^T & \! \! \! \! C^T \\ B^T & \! \! \! \! D^T \end{matrix}} \cr
    \end{pmat}^T
    \! \! \! \! N_u \! \!
    \begin{pmat}[{}]
      I \cr \-
      {\begin{matrix} A^T & \! \! \! \! C^T \\ B^T & \! \! \! \! D^T \end{matrix}} \cr
    \end{pmat} \! \! \! \geq \! 0 .
\end{equation*}
By Proposition \ref{dualisationineq}, (\ref{preunknownverdissineq}) holds with $P \coloneqq Q^{-1}$. Thus, the data $(U_{-}, X, Y_{-})$ are informative for closed-loop dissipativity.

For the second statement, let us assume that the data $(U_{-}, X)$ are informative. This implies that there exist $K$ and $P \geq 0$ such that (\ref{preknownverdissineq}) holds for all $(A, B) \in \Sigma_k$. First, suppose that $P > 0$.
After applying the dualisation result, this time we can rewrite the dual dissipation inequality as
\begin{equation} \label{knownverrewrittenineq}
\begin{pmat}[{}]
    I \cr \-
    {\begin{matrix} A^T & \! \! \! 0 \\ B^T & \! \! \! 0 \end{matrix}} \cr
\end{pmat}^T
M_k
\begin{pmat}[{}]
    I \cr \-
    {\begin{matrix} A^T & \! \! \! 0 \\ B^T & \! \! \! 0 \end{matrix}} \cr
\end{pmat}^T \geq 0,
\end{equation}
where $M_k$ is defined as in (\ref{M_k}) with $H$ given by
\begin{equation*}
    \! \begin{bmatrix} 0 & \! \! 0 \\ 0 & \! \! -C_sQC_s^T \! \! -C_sQK^TD_s^T \! \! -D_sKQC_s^T \! \! -D_sKQK^TD_s^T \end{bmatrix}
\end{equation*}
and $Q = P^{-1}$. The data $(U_{-}, X)$ are informative for closed-loop dissipativity, so (\ref{knownverrewrittenineq}) holds for all $(A, B)$ that satisfy (\ref{noisemodelknownver}), but due to the difference in the structures of (\ref{noisemodelknownver}) and (\ref{knownverrewrittenineq}), we are unable to immediately apply Proposition \ref{dissmainproof1}. To alleviate this problem, we invoke Proposition \ref{PiW}, by noting that
\begin{equation*}
    \begin{bmatrix} A^T \\ B^T \end{bmatrix} \begin{bmatrix} I & 0_{n \times p} \end{bmatrix} = \begin{bmatrix} A^T & 0 \\ B^T & 0 \end{bmatrix}.
\end{equation*}
Because $\Phi_{22} < 0$ and $\begin{bmatrix} X_{-}^T & U_{-}^T \end{bmatrix}^T$ is full row rank, the $(2, 2)$ block of $N_k$, i.e.
\begin{equation*}
    \begin{bmatrix} X_{-} \\ U_{-} \end{bmatrix} \Phi_{22} \begin{bmatrix} X_{-} \\ U_{-} \end{bmatrix}^T \! \! \!,
\end{equation*}
is negative definite and as such we have that $\mathcal{Z}_{n+m}(N_k) \begin{bmatrix} I & 0_{n \times p} \end{bmatrix} = \mathcal{Z}_{n+m}(\bar{N}_k)$, where
\begin{align*}
    \bar{N}_k &\coloneqq
\begin{pmat}[{|}]
    {\begin{matrix} I & 0_{n \times p} \end{matrix}} & 0 \cr \-
    0 & I \cr
\end{pmat}^T
N_k
\begin{pmat}[{|}]
    {\begin{matrix} I & 0_{n \times p} \end{matrix}} & 0 \cr \-
    0 & I \cr
\end{pmat} \\
&=
\begin{pmat}[{|}]
    I & X_{+} \cr
    0 & 0_{p \times T} \cr \-
    0 & {\begin{matrix} -X_{-} \\ -U_{-} \end{matrix}} \cr
\end{pmat}
\Phi_{E^T}
\begin{pmat}[{|}]
    I & X_{+} \cr
    0 & 0_{p \times T} \cr \-
    0 & {\begin{matrix} -X_{-} \\ -U_{-} \end{matrix}} \cr
\end{pmat}^T \! \! \!.
\end{align*}
Consequently, $\mathcal{Z}_{n+m}(\bar{N}_k) \subseteq \mathcal{Z}_{n+m}(M_k)$. Given that $\hat{N}_k$ has a positive eigenvalue, it follows that $\bar{N}_k$ has a positive eigenvalue as well. It can also be shown that $\bar{N}_k \in \boldsymbol{\Pi}_{n+p,n+m}$. By Proposition \ref{dissmainproof1}, there exists a scalar $\alpha \geq 0$ such that $M_k-\alpha \bar{N}_k \geq 0$. In the same fashion as the first half of the proof of the first statement, $\hat{M}_k-\alpha \hat{N}_k \geq 0$ holds. This shows that \ref{thm1bP>0} holds.

Second, suppose that the matrix $P$ that satisfies (\ref{preknownverdissineq}) is not positive definite. Let $\xi \in \ker P$ be nonzero. Because the structures of (\ref{preknownverdissineq}) and (\ref{preunknownverdissineq}) are the same, we can follow the proof of Lemma \ref{Lemma: P > 0} to show that $\mathcal{C}_s \xi = 0$, where $\mathcal{C}_s$ is defined as in Definition \ref{informativitydefinitionknown}. After pre- and postmultiplying (\ref{preknownverdissineq}) by $\begin{bmatrix} \xi^T & 0 \end{bmatrix}$ and its transpose, we obtain that $\xi^T\mathcal{A}^TP\mathcal{A}\xi \leq 0$ with $\mathcal{A}$ defined as in Definition \ref{informativitydefinitionknown}. Because $P \geq 0$, we must have that $P\mathcal{A}\xi = 0$ for all $(A, B) \in \Sigma_k$. Take $(A, B) \in \inte(\Sigma_k)$ and define $A_\epsilon \coloneqq \epsilon v\xi^T$, where $\epsilon > 0$ and $v \in \mathbb{R}^n$ is nonzero. Then, $(A + A_\epsilon, B) \in \Sigma_k$ for sufficiently small $\epsilon$. This implies that $P(A + A_\epsilon + BK)\xi = PA_\epsilon\xi = P\epsilon v\xi^T\xi = 0$. However, $\epsilon$ and $\xi$ are nonzero, so $Pv=0$. Since $v$ was an arbitrary nonzero vector, we conclude that $P = 0$. Then, substituting in (\ref{preknownverdissineq}) results in
\begin{equation*}
    \begin{bmatrix} 0 & \! \! \! I \\ \mathcal{C}_s & \! \! \! F \end{bmatrix}^T \! \! S \! \begin{bmatrix} 0 & \! \! \! I \\ \mathcal{C}_s & \! \! \! F \end{bmatrix} \geq 0.
\end{equation*}
From Proposition \ref{Sylvester}, we know that $\mathcal{C}_s = C_s + D_sK = 0$. Therefore, $\im C_s \subseteq \im D_s$ and
\begin{equation*}
    \begin{bmatrix} I \\ F \end{bmatrix}^T S \begin{bmatrix} I \\ F \end{bmatrix} \geq 0.
\end{equation*}

Now, we prove the other direction. If \ref{thm1bP>0} is true, then we can use the same steps as in the second half of the proof of the first statement to show that the data are informative, but we replace (\ref{leftrightmultiplicationmatrix}) with
\begin{equation*}
\begin{pmat}[{}]
    I \cr \-
    {\begin{matrix} A^T & \! \! \! 0 \\ B^T & \! \! \! 0 \end{matrix}} \cr
\end{pmat}.
\end{equation*}
Otherwise, if \ref{thm1bP=0} is true, then we can choose a matrix $K \in \mathbb{R}^{m \times n}$ such that $C_s + D_sK = 0$ and $P=0$ in order for (\ref{preknownverdissineq}) to hold for all $(A, B) \in \Sigma_k$. This proves the theorem.
\end{proof}

Theorem \ref{maintheorem1} allows us to verify informativity for closed-loop dissipativity by using tools for linear matrix inequalities such as MOSEK \protect\cite{MOSEK} in order to check their feasibility. Furthermore, as we have not specified the matrix $S$, our result can be used with any supply rate that satisfies the inertia condition $\inertia (S) = (p, 0, d)$. We demonstrate this in the next section.

\section{Example} \label{section6}
Consider the following system:
\begin{align*}
    \textbf{x}(t+1) &= A_s\textbf{x}(t) + B_s\textbf{u}(t) + \begin{bmatrix} 0.534 \\ 0.233 \end{bmatrix}\textbf{w}(t), \\
    \textbf{y}(t) &= \begin{bmatrix} 0.573 & \! \! \! -0.462 \end{bmatrix}\textbf{x}(t) + 0.857\textbf{u}(t) + 0.474\textbf{w}(t),
\end{align*}
where the ``true'' state matrices are
\begin{equation*}
    A_s = \begin{bmatrix} -0.292 & 1.551 \\ -0.469 & 0.711 \end{bmatrix}, \ B_s = \begin{bmatrix} -0.066 \\ -0.397 \end{bmatrix}.
\end{equation*}
We want to design a controller that renders the system \emph{state-strictly passive} by state feedback, which can be done with an appropriate choice of the matrix $S$ and Theorem \ref{maintheorem1}.
\begin{definition}
    We call system (\ref{noiselessdiss}) \emph{state-strictly passive}, if there exist an $\epsilon > 0$ and a matrix $P \in \mathbb{S}^n$ with $P > 0$ such that
    \begin{equation*}
        \textbf{x}(t)^T P \textbf{x}(t) + 2\textbf{u}(t)^T\textbf{y}(t) - \epsilon ||\textbf{x}(t)||^2 \geq \textbf{x}(t+1)^T P \textbf{x}(t+1)
    \end{equation*}
    holds for all $t \geq 0$ and all trajectories $(\textbf{u}, \textbf{x}, \textbf{y})$ of (\ref{noiselessdiss}).
\end{definition}
This notion of dissipativity is relevant in the context of Lur'e systems, where absolute stability of the system is guaranteed if the linear part of the system is state-strictly passive (see e.g. \protect\cite{Boyd-1994-LMIs, vandeWouw-2008-outputfeedbackLur'e}). Define a new output $\textbf{z} \coloneqq \begin{bmatrix} \textbf{x}^T & \textbf{y}^T \end{bmatrix}^T$ and note that
\begin{equation*}
    \begin{bmatrix} u \\ y \end{bmatrix}^T \! \! \begin{bmatrix} 0 & \! \! I_m \\ I_m & \! \! 0 \end{bmatrix} \! \! \begin{bmatrix} u \\ y \end{bmatrix} - \epsilon ||x(t)||^2 = \begin{bmatrix} u \\ z \end{bmatrix}^T \! \! \begin{bmatrix} 0 & \! \! 0 & \! \! I_m \\ 0 & \! \! -\epsilon I_n & \! \! 0 \\ I_m & \! \! 0 & \! \! 0 \end{bmatrix} \! \! \begin{bmatrix} u \\ z \end{bmatrix} \!.
\end{equation*}

\noindent Thus, we can study state-strict passivity while still working within the framework of this paper by choosing
\begin{equation} \label{statestrictlypassivesupplyrate}
    S = \begin{bmatrix} 0 & 0 & I_m \\ 0 & -\epsilon I_n & 0 \\ I_m & 0 & 0 \end{bmatrix},
\end{equation}
where $\inertia (S) = (n+m, 0, m)$, $m=p$ and $\epsilon > 0$ is a decision variable.

Suppose that the output matrices are known. The knowledge of the $(A_s, B_s)$ matrices is used only for the purpose of generating $T = 30$ noisy input-state measurements. We assume that the noise samples are bounded in norm at all times, i.e. $|w(t)| \leq 1 \ \forall t$. This implies that $W_{-}$ satisfies (\ref{noisemodeldiss}), where
\begin{equation*}
    \Phi = \begin{bmatrix} TI & 0 \\ 0 & -I \end{bmatrix}.
\end{equation*}
The noise samples were randomly drawn from the uniform distribution in the interval $(0, 1)$. The initial state was randomly drawn from the standard normal distribution. The inputs were randomly drawn from the standard normal distribution multiplied by $20$ and the rank condition on the input-state data was satisfied. After verifying whether $\hat{N}_k$ has at least one positive eigenvalue, we use YALMIP \protect\cite{Lofberg-2004-YALMIP} with MOSEK in MATLAB to solve the LMI presented in Theorem \ref{maintheorem1}. The controller $K = \begin{bmatrix} -0.865 & 1.33 \end{bmatrix}$ makes the system state-strictly passive, i.e. dissipative with respect to the supply rate
\begin{equation*}
    s(w,z) = \begin{bmatrix} w \\ z \end{bmatrix}^T S \begin{bmatrix} w \\ z \end{bmatrix},
\end{equation*}
where $S$ is the same as in (\ref{statestrictlypassivesupplyrate}) with $\epsilon = 0.335$. The matrix $P$ that satisfies the dissipation inequality is
\begin{equation*}
    P = \begin{bmatrix} 1.684 & -1.391 \\ -1.391 & 6.246 \end{bmatrix}.
\end{equation*}
Alternatively, if we assume that we possess no knowledge of the output matrices, we generate the outputs using the same input-state and noise measurements and after verifying that $\hat{N}_u$ has at least one positive eigenvalue, we instead have
\begin{equation*}
    \epsilon = 0.44, \ K \! = \! \begin{bmatrix} -0.836 & \! \! 1.175 \end{bmatrix}, \ P \! = \! \begin{bmatrix} 1.667 & \! \! -1.83 \\ -1.83 & \! \! 6.998 \end{bmatrix}.
\end{equation*}

\section{Conclusions and Discussion} \label{section7}
Throughout this paper, we have considered a linear input-state-output system, where the unknown noise is contained in a given subspace and the matrix of noise samples satisfies a QMI. Next, depending on the presence or absence of prior knowledge on the output system matrices, we have derived necessary and sufficient LMI conditions for noisy data to be informative for closed-loop dissipativity. Additionally, from a finite number of informative data samples, we have found a controller that renders our system dissipative. Finally, we have applied our results with the aim of achieving closed-loop state-strict passivity in a numerical example.

We have assumed to have measurements of the system's state, so a natural extension of our results will involve input-output data only. In future work, similar to \protect\cite{vanWaarde-2024-iobehaviouraldatadrivenstabilisation}, we can adopt both the behavioural and informativity approaches in order to study dissipativity properties and design feedback controllers for linear input-output systems in auto-regressive form. Another interesting research line involves developing specific algorithms for solving the LMIs in Theorem \ref{maintheorem1}.

\bibliographystyle{IEEEtran}
\bibliography{references}

\end{document}